\title{Well-posedness and long-term behaviour for a troposphere wave propagation model
}
\author{Paul Holst\thanks{Universität Hamburg, MIN faculty, Department of Mathematics, Germany ({paul.holst@uni-hamburg.de}).}
\and Jens D.M. Rademacher\thanks{Universit\"at Hamburg, MIN faculty, Department of Mathematics, Germany ({jens.rademacher@uni-hamburg.de}).} 
}
\date{}
\numberwithin{equation}{section}
\theoremstyle{definition}
\newtheorem{rem}{Remark}[section]
\theoremstyle{plain}  
\newtheorem{thm}[rem]{Theorem}
\newtheorem{defn}[rem]{Definition}
\newtheorem{prop}[rem]{Proposition}
\newtheorem{lem}[rem]{Lemma}
\newcommand{\R}{\mathbb{R}}
\newcommand{\N}{\mathbb{N}}
\newcommand{\dx}{\, dx}
\newcommand{\vr}{\varphi}
\renewcommand{\phi}{\varphi}
\newcommand{\ldiv}{L_{2,\textnormal{div}}(\Omega)}
\newcommand{\dom}{\operatorname{dom}}
\newcommand{\pl}{\partial}
\newcommand{\hhp}{H_{\textnormal{hp}}^1(\Omega)}
\newcommand{\hhpu}{H_{\textnormal{div}}^1(\Omega)}
\newcommand{\hhpui}{H_{\textnormal{div}}^{-1}(\Omega)}
\newcommand{\chp}{C_{\textnormal{hp}}^{\infty}(\overline{\Omega})}
\newcommand{\Hhpu}{H_{\textnormal{div}}^2(\Omega)}
\newcommand{\chpu}{C_{\textnormal{div}}^{\infty}(\overline{\Omega})}
\newcommand{\lin}{\operatorname{lin}}
\newcommand{\ind}{\mathbf{1}}
\newcommand{\Dp}{\Delta_{\textnormal{div}}}
\newcommand{\sq}{\subseteq}
\definecolor{colorJRblue}{rgb}{0.,0.,1.}
\definecolor{colorJRred}{rgb}{1.,0.,0.}
\definecolor{colorJRgreen}{rgb}{0.,0.6,0.}
\begin{document}

\maketitle

\begin{abstract}
In this paper, we investigate a model recently derived by A. Constantin and R.S. Johnson for nonlinear wave propagation in the troposphere, particularly the 'morning glory' cloud pattern. We consider the model with natural Dirichlet boundary conditions for the vertical velocity at the top of the troposphere, and thus introduce a new pressure term. This modified system has a structural relation to the 2D primitive equations, for which global well-posedness and the existence of a global attractor are already known. We transfer these results to the modified model, giving proofs that exploit specific features and use standard methods combined with anisotropic Sobolev inequalities. Additionally, we show that the attractor exists only for specific parameter ranges, while for other parameters, we find runaway solutions with unbounded growth over time.   

\vspace{8pt}

\noindent MSC 2020: 35B40, 35B45, 35K61, 35Q86, 37L65


\vspace{2pt}

\noindent Keywords: Global strong solutions, Global attractor, Instability, Atmospheric flow, A-priori estimates, Anisotropic Sobolev inequalities
\end{abstract}


\section{Introduction}\label{sec-intro}

In the context of nonlinear wave propagation in the troposphere, in \cite{CoJo22} Constantin and Johnson derived an asymptotic model from the general equations governing the atmospheric flow. Following \cite{MaRo23}, we write the resulting system of partial differential equations from \cite[Equation~(6.18)]{CoJo22} in the simplified form 
\begin{align}
\pl_t u + u\pl_1u + v \pl_2u &= \mu \Delta u + \alpha u + \beta v + K, \label{main-sys}\\
\pl_1 u + \pl_2 v &= 0, \label{incom-cond}
\end{align}
which are solved for $u,v \colon [0,T] \times \Omega \to \R$ in a suitable rectangular domain $\Omega \sq \R^2$. 
Here $T, \mu > 0$, $\alpha, \beta \in \R$, and $K \colon [0,T] \times \Omega \to \R$ is a given function{, representing thermodynamic forcing.}

When keeping the boundary conditions flexible, system \eqref{main-sys}-\eqref{incom-cond} {possesses} a special oscillatory solution discovered in \cite[Section~6(d)]{CoJo22} and traveling wave solutions found in \cite{CoJo23}. 
Implementing a Dirichlet boundary condition for $u$ and $v$ at the lower boundary of $\Omega$ and for $u$ only at the top boundary of $\Omega$, the existence of weak solutions and short-time classical solutions with $\Omega = \R \times (0,1)$ was shown in \cite{MaRo23} and the global existence of strong solutions for small inital data was established in \cite{AlGr24}. 
We are not aware of further results regarding well-posedness of \eqref{main-sys}-\eqref{incom-cond}. 

Motivated by these studies and a possible relation to the standard atmospheric model given by the primitive equations, in this paper we reconsider the boundary conditions. Dirichlet boundary condition for $u$ and $v$ at the lower boundary are meaningful, as it represents the Earth's surface, where no-slip conditions apply. 
%
Considering that the air density is low at the height of the upper boundary of the troposphere, it is typically the case that $u$ and $v$ are also small at this height, and a standard assumption  is that these $0$ at the upper boundary. See, e.g., \cite[\S 11.3.2]{Pie02} and \cite{Kelly} also for a scrutiny of these boundary conditions. 
Imposing the Dirichlet boundary condition for $v$ at the upper boundary fixes the flow and to maintain this constraint, the system responds by introducing an additional pressure term. This is similar to what happens in the 2D primitive equations for the ocean, where the pressure adjusts the flow to satisfy the Dirichlet boundary boundary condition at the ocean surface for the vertical velocity $w$. 

When implementing these modifications for a fixed domain 
\[
\Omega:=(-\pi,\pi) \times (0,\pi),
\]
\eqref{main-sys}-\eqref{incom-cond} turn into the new system of partial differential equations \begin{align}
\pl_t u + u\pl_1u + v \pl_2u + \pl_1p &= \mu \Delta u + \alpha u + \beta v + K, \label{main-sys-2}\\
\pl_1 u + \pl_2 v &= 0, \label{incom-cond-2}
\end{align}
where we seek $u,v\colon [0,T] \times \Omega \to \R$ and {pressure} $p\colon [0,T] \times \Omega \to \R$ 
subject to the boundary conditions 
\begin{align}
&u,v \text{ are periodic in } x_1 \text{ of period } 2\pi, \label{bound-cond-1} \\ &u(\cdot,0) = u(\cdot,\pi) = 0 \text{ on } (-\pi,\pi), \quad  v(\cdot,0) = v(\cdot,\pi) = 0 \text{ on } (-\pi,\pi), \label{bound-cond-2}   
\end{align}
and initial condition 
\begin{equation}\label{init-cond}
u(0) = u_0,    
\end{equation}
with $u_0 \colon \Omega \to \R$ a given function.

In this paper we study the well-posedness and aspects of the long term dynamics of system \eqref{main-sys-2}--\eqref{init-cond}. 
In particular, we show global well-posedness, i.e., the existence of unique global strong solutions and the continuous dependence on the initial value, for the system \eqref{main-sys-2}-\eqref{init-cond}, which also corroborates 
the need of the additional pressure term. 
Concerning the long-term behaviour we first note that both \eqref{main-sys}-\eqref{incom-cond} and \eqref{main-sys-2}-\eqref{init-cond} possess the explicit plane wave solution 
\[
u(t,x)=e^{(\alpha-\mu)t}\sin(x_2).
\]
For $\alpha>\mu$ this grows exponentially and unboundedly, similar to \cite{Calz}, but might also be unphysical and related to reflections from the boundary \cite{Kelly}. The presence of these runaway solutions suggests that care has be taken concerning the values for the parameters $\alpha$, $\beta$ and $\mu$. Indeed, we show this behaviour is not occurring in \eqref{main-sys-2}-\eqref{init-cond} for $\alpha$ below an explicit bound in terms of $\beta,\mu$, for which we prove the existence of a compact global attractor. 


The proofs of global well-posedness and the global attractor are essentially a direct application of the methods for similar results for the 2D primitive equations. The key observation for this relation is that at $\alpha = \beta = 0$  \eqref{main-sys-2}-\eqref{init-cond} is a subsystem of the 2D primitive equations. In our proofs of both results we show how to deal with non-zero $\alpha$ and $\beta$. We build upon the approach in \cite{PeTeZi09} to establish well-posedness via a-priori estimates, adapting the method by leveraging the improved behavior of the nonlinearity under Dirichlet boundary conditions for $u$ and $v$ at the top and bottom of the domain, as opposed to the Neumann boundary conditions used in \cite{PeTeZi09}. This modification allows us to employ anisotropic a priori Sobolev estimates that are not applicable in their work, thereby simplifying our proof compared to that in \cite{PeTeZi09}. These estimates may also be of independent interest. 

\medskip
Let us briefly return to the modelling background. As noted above, the model \eqref{main-sys}-\eqref{incom-cond} has been derived in order to better understand nonlinear wave propagation in the troposphere. This system especially serves as a  mathematical model for the `morning glory' cloud pattern, an awe-inspiring atmospheric phenomenon predominantly observed in coastal regions, particularly in Australia. The morning glory cloud pattern involves the mesmerizing movement of long, slender cloud formations that extend horizontally for hundreds of kilometers, perpendicular to the cloud line. For more details about this phenomenon we refer to 
\cite{BiRe13,Chr92,CrRoSm82}. 
The motion captured by \eqref{main-sys}-\eqref{incom-cond} is essentially two-dimensional, occurring in the horizontal $x_1$- and vertical $x_2$-directions. In this context, the variables $u$ and $v$ can be interpreted as the corresponding horizontal and vertical components of velocity, respectively. 
The term $K$ represents a thermodynamic forcing term that encompasses the heat sources responsible for driving the observed motion. 
We note that \eqref{incom-cond} does not result from an incompressibility assumption. 
For a more comprehensive understanding of the physical background and the derivation of this model we refer to \cite{CoJo22}. 

\medskip 
The remainder of this paper is organised as follows. In \S\ref{sec-main} we formulate the two main results in mathematical detail: global well-posedness of \eqref{main-sys-2}-\eqref{init-cond} and existence of a global attractor for suitable choices of $\alpha$, $\beta$ and $\mu$. Section~\ref{sec-prelim} provides essential preliminaries for proving these results. Finally, \S\ref{sec-strsol} and \S\ref{sec-attractor} contain the proofs.

\section{Main results}\label{sec-main}

In order to formulate our main results, Theorems~\ref{main-thm} and~\ref{main-thm-attrac} below, we rewrite  
\eqref{main-sys-2}-\eqref{init-cond} as an evolution equation only in terms of $u$ 
and provide definitions for strong and weak solutions. We first introduce appropriate function spaces 
\begin{align*}
\chp &:= \{ u \in C^{\infty}(\overline{\Omega}); \ \pl^{\gamma}u(-\pi,x_2) = \pl^{\gamma}u(\pi,x_2), \forall x_2 \in [0,\pi], \ \gamma \in \N_0^2 \}, \\
\chpu &:= \bigg\{ u \in \chp; \ u(\cdot,0) = u(\cdot,\pi) = 0, \ \int_0^{\pi} \pl_1 u(\cdot,x_2) \,dx_2 = 0 \bigg\}, \\
H_{\textnormal{hp}}^k(\Omega) &:= \overline{\chp}^{H^k(\Omega)} \qquad (k \in \N), \\ 
H_{\textnormal p}^1(-\pi,\pi)&:=\overline{\{u \in C^{\infty}[-\pi,\pi]; \ u^{(n)}(-\pi) = u^{(n)}(\pi) \ (n \in \N)\}}^{H^1(-\pi,\pi)}\\
\ldiv &:= H_{\textnormal{div}}^0(\Omega):= \bigg\{u \in L_2(\Omega);  \int_{\Omega} u \pl_1 \psi \dx = 0 , \forall\psi \in H_{\textnormal p}^1(-\pi,\pi) \bigg\}, \\
H_{\textnormal{div}}^k(\Omega) &:= \overline{\chpu}^{H^k(\Omega)} \qquad (k \in \N).
\end{align*}
Here $\chpu$, $\ldiv$ and $H_{\textnormal{div},0}^k(\Omega)$ denote the spaces of all $\chp$-, $L_2(\Omega)$- and $H_{\textnormal{hp}}^k(\Omega)$-functions, respectively, {with the following properties: The functions and} all their derivatives are periodic with period $2\pi$ in the $x_1$-variable, satisfy a Dirichlet boundary condition on $[-\pi,\pi] \times \{0,\pi\}$ and the integral condition $\int_0^{\pi} \pl_1 u(\cdot,x_2) \,dx_2 = 0$. The latter condition is equivalent to a Dirichlet boundary condition for the vertical velocity $v$ 
(see \eqref{def-v} below). We note that $u \in \ldiv$ satisfies this condition in a \textit{weak sense} only, because $u \in \hhp$ implies $\int_0^{\pi} \pl_1 u(\cdot,x_2) \,dx_2 = 0$ if and only if 
\[
\int_{\Omega}u \pl_1 \psi\dx = - \int_{-\pi}^{\pi} \int_0^{\pi} \pl_1 u(x_1,x_2) \,dx_2 \, \psi(x_1) \,dx_1 = 0 \qquad \forall\psi \in H_{\textnormal p}^1(-\pi,\pi).
\]
For every $k \in \N$, the spaces $H_{\textnormal{hp}}^k(\Omega)$ and $H_{\textnormal{div}}^k(\Omega)$ are Hilbert spaces with the scalar product induced by $H^k(\Omega)$. In what comes, we will also use the fact that the mapping $\|\cdot\|_{1,\textnormal{div}} \colon \hhpu \to \R,$ 
\begin{equation}\label{def-norm-div}
\|u\|_{1,\textnormal{div}}:= \|\nabla u\|_2 \qquad (u \in \hhpu)
\end{equation}
defines an equivalent norm on $\hhpu$; see \eqref{poin-1} below. Moreover we denote by $\hhpui$ the dual space of $\hhpu$. In the following, we will also consider $u \in L_2(\Omega)$ as an element of $\hhpui$, by virtue of the embedding $L_2(\Omega) \ni u \mapsto (v \mapsto  \langle u,v\rangle_2) \in \hhpui$, where here and in the following, we write $\langle f,g \rangle_{L_2(\Omega)}:= \langle f,g \rangle_2 := \int_{\Omega} fg \dx$ for the scalar product in $L_2(\Omega)$ of two functions $f,g \in L_2(\Omega)$. Note that $\|u\|_{\hhpui} \leqslant \|u\|_2$ for all $u \in L_2(\Omega)$. 

Furthermore, for $k \in \N_0$, we observe that 
\begin{align}
H_{\textnormal{div}}^k(\Omega) &= \bigg\{ u \in H_{\textnormal{hp}}^k(\Omega); \ u(\cdot,0) = u(\cdot,\pi) = 0, \ \int_0^{\pi} \pl_1 u(\cdot,x_2) \dx_2 = 0 \bigg\} \nonumber\\ 
&= \overline{\lin\{e_{k,\ell} ; \ \ell \in \mathcal{Z} 
\} }^{H^k(\Omega)}, \label{H-div-identity}
\end{align}
where {$\mathcal{Z}:= (\mathbb{Z} \times 2\N) \cup (\{0\} \times \N)$ and}
\begin{align}
e_{k,\ell}(x_1,x_2)&:=a_{k,\ell}\cos(\ell_1x_1)\sin(\ell_2x_2) \qquad (\ell \in \N_0 \times \N), \label{ortho-1}\\ 
e_{k,\ell}(x_1,x_2)&:=b_{k,\ell}\sin(\ell_1x_1)\sin(\ell_2x_2) \qquad (\ell  \in (\mathbb{Z} \setminus \{\N_0\}) \times \N), \label{ortho-2}
\end{align}
with $a_{k,\ell},b_{k,\ell} \in \R$ chosen such that $\|e_{k,\ell}\|_{H^k(\Omega)} = 1$ for all $\ell \in \mathcal{Z}$
; in particular,
\begin{equation}\label{l2-div-identity}
\ldiv = \overline{\lin\{e_{0,\ell} ; \ \ell \in \mathcal{Z}
\} }^{L_2(\Omega)}.    
\end{equation}
These identities hold since $(e_{k,\ell})_{\ell \in \mathcal{Z}}$ is an orthonormal basis of $H_{\textnormal{div}}^k(\Omega)$, 
which follows from the standard fact that 
it is for $\{u \in H_{\textnormal{hp}}^k(\Omega) \ u(\cdot,0) = u(\cdot,\pi) = 0\}$ and the equivalence
\[
\ell \in \mathbb{Z} \times \N, \ \int_0^{\pi} \pl_1 e_{k,\ell}(\cdot,x_2) \dx_2 = 0 \quad \Longleftrightarrow \quad \ell \in (\mathbb{Z} \times 2\N\}) \cup (\{0\} \times \N).
\]
To simplify notation, let $e_{\ell}:= e_{0,\ell}$ for all $\ell \in \mathbb{Z} \times \N$.

In the following we aim to express the system  \eqref{main-sys-2}-\eqref{init-cond} as an evolution equation only in terms of $u${, which will involve a Leray-projection as usual for incompressibility.  First, to} represent $v$ in terms of $u$, we {integrate} \eqref{incom-cond} 
with respect to $x_2$ and use the boundary conditions \eqref{bound-cond-2}  and obtain 
\[
v{(x_1,x_2)} = - \int_0^{x_2} \pl_1u{(x_1,x_2')} \dx_2'.
\]
Hence, we define for $u \in L_2(\Omega)$ with $\pl_1 u \in L_2(\Omega)$, 
the function
\begin{equation}\label{def-v}
v_u(x_1,x_2):= - \int_0^{x_2} \pl_1 u(x_1,x_2') \,dx_2' \qquad (\text{a.e. } (x_1,x_2) \in \Omega), 
\end{equation}
so that $v_u \in L_2(\Omega)$ and $\pl_2 v_u =  -\pl_1 u$. We note that  $v_u(x_1,\cdot) \in H_0^1(0,\pi)$ for a.e. $x_1 \in (-\pi,\pi)$ if $u \in \hhpu$.

We seek solutions $u$ of \eqref{main-sys-2} such that $\int_0^{\pi} \pl_1u(\cdot,x_2) \,dx_2 = 0$, 
and, using $\pl_1 p$, we can choose $K$ to satisfy this. However, the constraint is in general violated by the nonlinear terms $u\pl_1u$, $v_u\pl_2u$ as well as the linear term $\beta v_u$ in \eqref{main-sys-2}. 
Hence, we use the orthogonal projection $\mathbb{P}\colon L_2(\Omega) \to \ldiv$ onto $\ldiv$ 
that will in particular remove the pressure and is in fact the first component of the usual Leray-projection. For later purposes we provide some details next. 
Due to \eqref{l2-div-identity}, the orthogonal complement 
of $\ldiv$ is given by
\[
\ldiv^{\perp} = \overline{\lin\{e_{\ell} ; \ \ell \in (\mathbb{Z} \setminus \{0\}) \times (2\N-1))\} }^{L_2(\Omega)}.
\]
On the other hand one readily checks that 
\[
\overline{\lin\{e_{\ell} ; \ \ell \in (\mathbb{Z} \setminus \{0\}) \times (2\N-1))\} }^{L_2(\Omega)} \subseteq \{\pl_1p; \ p \in {H_{\textnormal{hp}}^{1,0}(\Omega) } \cap \ldiv^{\perp}\} \sq \ldiv^{\perp},
\]
where here {$H^{1,0}(\Omega):=\{u \in L_2(\Omega); \ \pl_1 u \in L_2(\Omega)\}$}. Hence, 
 $\ldiv^{\perp} = \{\pl_1p; \ p \in {H_{\textnormal{hp}}^{1,0}(\Omega)} \cap \ldiv^{\perp}\}$ 
 and thus, for every $u \in L_2(\Omega)$, one has that $(I-\mathbb{P})u = \pl_1 p$ for some $p \in {H_{\textnormal{hp}}^{1,0}(\Omega) } \cap \ldiv^{\perp}$.

\medskip
For such $p$, the representation \eqref{def-v} {the assumption $K \in L_2(0,T;\ldiv)$} and the orthogonal projection $\mathbb{P}$, we can rewrite \eqref{main-sys-2}-\eqref{init-cond} as the evolution equation
\begin{equation}\label{evol-1}
u'(t) + \mathbb{P}(u(t)\partial_1 u(t) + v_{u(t)}\partial_2u(t)) = \mu\Delta u(t) + \alpha u(t) + \beta\mathbb{P} v_{u(t)} + K(t), \quad u(0) = u_0,
\end{equation}
where $u'$ represents the derivative of $u$ with respect to $t$. 

Concerning strong solutions, {we can now mathematically define them as follows.} 
\begin{defn}
Let $u_0 \in \ldiv$ and $K \in L_2(0,T;\ldiv)$. We call a function $u \colon (0,T) \to \ldiv$ a \textbf{strong solution of \eqref{evol-1}} (with respect to $u_0$ and $K$) if $u \in L_2(0,T;H_{\textnormal{div}}^2(\Omega)) \cap H^1(0,T;\ldiv)$, $u(0) = u_0$ and
\eqref{evol-1} holds in $\ldiv$ for a.e. $t \in (0,T)$. 
\end{defn}
Note that such $u$ automatically lies in $C([0,T];\hhpu)$, since $L_2(0,T;H_{\textnormal{div}}^2(\Omega)) \cap H^1(0,T;\ldiv)$ is continuously embedded in $C([0,T];\hhpu)$, as shown in \cite[Thm~19.7]{ArVoVoi22} (applied with $A = \Dp $, which is defined at the beginning of Section~\ref{sec-prelim}).

\medskip
In order to provide the definition of a weak solution we first derive the weak formulation of \eqref{evol-1} assuming that $u$ is a classical solution of \eqref{evol-1}: 
multiply \eqref{evol-1} by 
$\varphi \psi$, where $\varphi \in C^\infty[0,T)$ with compact support in $[0,T)$ and $\psi \in \chpu$, integrate the resulting equation over $(0,T) \times \Omega$, and 
integrate by parts. We thus obtain the weak formulation
\begin{align*}
& - \int_0^T \langle u(t), \vr'(t)\psi \rangle_2 \,dt + \mu\int_0^T \int_{\Omega} \nabla u(t) \cdot \nabla \psi + (u(t) \pl_1u(t) + v_{u(t)}\pl_2u(t)) \psi  \dx \,\vr(t) \,dt \\ 
&= \int_0^T \alpha \langle u(t),\vr(t)\psi \rangle_2 + \beta \langle v_{u(t)},\vr(t) \psi \rangle_2 \,dt + \int_0^T \langle K(t),\vr(t)\psi \rangle_2 \,dt + \langle u_0,\vr(0)\psi \rangle_2.
\end{align*} 
This motivates our definition of a weak solution for \eqref{evol-1}.
\begin{defn}
Let $u_0 \in \ldiv$ and $K \in L_2(0,T;H_{\textnormal{div}}^{-1}(\Omega))$. We call a function $u\colon (0,T) \to \ldiv$ a \textbf{weak solution of \eqref{evol-1}} (with respect to $u_0$ and $K$) if $u \in L_2(0,T;\hhpu)$, $u' \in L_1(0,T;H_{\textnormal{div}}^{-1}(\Omega))$ and
\begin{align}
&- \langle u_0,\vr(0)\psi \rangle_2 - \int_0^T \langle u(t), \vr'(t)\psi \rangle_{H_{\textnormal{div}}^{-1}(\Omega),H_{\textnormal{div}}^1(\Omega)} \,dt \nonumber \\
& + \mu\int_0^T \int_{\Omega} \nabla u(t) \cdot \nabla \psi + (u(t) \pl_1u(t) + v_{u(t)}\pl_2u(t))\psi \dx \, \vr(t) \,dt \nonumber \\ 
&= \int_0^T \alpha \langle u(t),\vr(t)\psi \rangle_2 + \beta \langle v_{u(t)},\vr(t) \psi \rangle_2 \,dt + \int_0^T \langle K(t),\vr(t)\psi \rangle_{H_{\textnormal{div}}^{-1}(\Omega),\hhpu} \,dt \nonumber
\end{align}
holds for all $\varphi \in C^\infty[0,T)$ with compact support in $[0,T)$ and all $\psi \in \hhpu$ (here $\langle \cdot,\cdot \rangle_{H_{\textnormal{div}}^{-1}(\Omega),\hhpu}$ denotes the dual pairing of $\hhpu$ and $H_{\textnormal{div}}^{-1}(\Omega)$)
\end{defn}

\medskip
Now we are ready to formulate the result about global well-posedness for the system \eqref{main-sys-2}-\eqref{init-cond}, our first main result.
{Here and in the following we freely use $v$ to denote a function, not the velocity component, which is given as $v_u$ from \eqref{def-v}.}

\begin{thm}\label{main-thm}
Let $T, r > 0$, and let $u_0 \in L_2(\Omega)$, $K \in L_2(0,T;H_{\textnormal{div}}^{-1}(\Omega))$ such that $\|u_0\|_2 \leqslant r$ and $\|K\|_ {L_2(0,T;H_{\textnormal{div}}^{-1}(\Omega))} \leqslant r$. Then there exists a weak solution of \eqref{evol-1} with respect to $u_0$ and $K$. Moreover, if $u_0 \in \hhpu$ and $K \in L_2(0,T;\ldiv)$ with $\|\nabla u_0\|_2 \leqslant r$ and  $\|K\|_ {L_2(0,T;L_2(\Omega))} \leqslant r$, then the weak solution is unique. 

Furthermore, for $v_0 \in \hhpu$ with $\|\nabla v_0\|_2 \leqslant r$ the corresponding strong solution $v$ of \eqref{evol-1} with respect to $v_0$ and $K$ satisfies
\begin{align}
\sup_{t \in [0,T]}\|\nabla (u(t) - v(t))\|_2^2 
\leqslant C \|\nabla(u_0-v_0)\|_2^2  \label{contin-depend}
\end{align}
for some $C \geqslant 0$ only depending on $\alpha$, $\beta$, $\mu$, $T$ and $r$. In particular, the function $\hhpu \ni {u}_0 \mapsto {u} \in C([0,T];\hhpu)$ is Lipschitz continuous on bounded subsets of $\hhpu$.
\end{thm}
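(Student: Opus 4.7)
The proof has three layers: existence of weak solutions, uniqueness when the data are smoother, and Lipschitz continuous dependence of strong solutions. My plan is to obtain all three from a single Galerkin scheme based on the orthonormal basis $(e_\ell)_{\ell \in \mathcal{Z}}$ of $\ldiv$, together with the anisotropic Sobolev inequalities from Section~\ref{sec-prelim}.

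For existence I project \eqref{evol-1} onto the linear span of $e_\ell$ for finitely many $\ell \in \mathcal{Z}$, solve the resulting locally Lipschitz ODE system, and test the projected equation against the approximation $u_n$. The projection $\mathbb{P}$ drops out because $u_n \in \ldiv$, and the nonlinearity cancels: using periodicity in $x_1$, the boundary conditions $v_u(\cdot,0)=v_u(\cdot,\pi)=0$ and $\pl_2 v_u = -\pl_1 u$,
\[
\langle u\pl_1 u + v_u \pl_2 u, u\rangle_2 = \tfrac{1}{3}\int_\Omega \pl_1(u^3)\dx + \tfrac{1}{2}\int_\Omega v_u \pl_2(u^2)\dx = 0.
\]
Combined with $\mu\langle -\Delta u,u\rangle_2 = \mu\|\nabla u\|_2^2$, the Poincaré bound $\|v_u\|_2 \leqslant C\|\pl_1 u\|_2$, Young's inequality and Grönwall, this yields uniform bounds for $u_n$ in $L_\infty(0,T;\ldiv) \cap L_2(0,T;\hhpu)$ and, by testing with elements of $\hhpu$, a bound on $u_n'$ in $L_2(0,T;\hhpui)$. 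Aubin--Lions compactness gives $u_n \to u$ strongly in $L_2(0,T;\ldiv)$, which suffices to identify the quadratic nonlinearity in the sense of distributions. When $u_0 \in \hhpu$ and $K \in L_2(0,T;\ldiv)$, I additionally test with $-\Delta u_n$; here the nonlinearity no longer cancels and is handled by the anisotropic estimates, producing the higher a priori bound needed for a strong solution.

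For uniqueness, let $u$ be any weak solution and $v$ the strong solution associated with the same data; setting $w = u - v$, the difference equation reads
\[
w' + \mathbb{P}\bigl(u\pl_1 w + w\pl_1 v + v_u \pl_2 w + v_w \pl_2 v\bigr) = \mu\Delta w + \alpha w + \beta\mathbb{P}v_w, \qquad w(0)=0.
\]
Testing with $w$, the pair $\langle u\pl_1 w + v_u\pl_2 w, w\rangle_2$ cancels as before, leaving the critical pairings $\langle w^2,\pl_1 v\rangle_2$ and $\langle w v_w,\pl_2 v\rangle_2$. These are bounded by splitting the $(x_1,x_2)$-integration and applying the anisotropic inequalities, using $v \in L_2(0,T;\Hhpu) \cap L_\infty(0,T;\hhpu)$. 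After absorbing a small multiple of $\|\nabla w\|_2^2$ into $\mu\|\nabla w\|_2^2$, Grönwall with an $L_1(0,T)$ forcing depending polynomially on $\|\nabla v\|_2$ and $\|\Delta v\|_2$ gives $w \equiv 0$. For \eqref{contin-depend} I test the same difference equation with $-\Delta w$, so that the left-hand side becomes $\tfrac{1}{2}\tfrac{d}{dt}\|\nabla w\|_2^2 + \mu\|\Delta w\|_2^2$; each of the six resulting nonlinear pairings is bounded by $\epsilon\|\Delta w\|_2^2 + C_\epsilon h(t)\|\nabla w\|_2^2$ with $h \in L_1(0,T)$ polynomial in the strong-solution norms of $u$ and $v$, which remain uniformly bounded for data in the ball of radius $r$. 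Absorbing and applying Grönwall delivers \eqref{contin-depend}, from which Lipschitz continuity on bounded subsets of $\hhpu$ is immediate.

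The main obstacle is the honest quantitative execution of the anisotropic Sobolev estimates in the $H^1$-difference step, in particular the mixed term $\langle v_w \pl_2 v, -\Delta w\rangle_2$: one must split the $(x_1,x_2)$-integration, exploit that $v_w$ inherits a one-dimensional Poincaré inequality from $v_w(\cdot,0)=v_w(\cdot,\pi)=0$, and balance interpolation exponents so that each resulting factor of $\|\Delta w\|_2$ can be absorbed into $\mu\|\Delta w\|_2^2$. It is precisely this structural feature, unlocked by the modified top boundary condition $v=0$ together with the pressure term, that permits a shorter argument than the Neumann-type setting of \cite{PeTeZi09}.
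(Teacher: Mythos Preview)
Your outline for weak existence and for the continuous-dependence estimate \eqref{contin-depend} is essentially correct and mirrors the paper: the Galerkin scheme, the $L_2$-energy identity using the cancellation $\langle B(u,u),u\rangle_2=0$, Aubin--Lions compactness, and the $-\Delta w$ test on the difference of two strong solutions all go through as you describe.

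The gap is in the strong-existence step. Testing the Galerkin equation directly with $-\Delta u_n$ does \emph{not} produce a global $H^1$-bound. The pairing $\langle B(u,u),\Delta u\rangle_2$ does not vanish, and the anisotropic estimate of Lemma~\ref{nonlin-eq-ineq}(b) yields at best
\[
|\langle B(u,u),\Delta u\rangle_2|\leqslant M_B''\|\nabla u\|_2^{3/2}\|\Delta u\|_2^{3/2}\leqslant \tfrac{\mu}{2}\|\Delta u\|_2^2+C\|\nabla u\|_2^6,
\]
so that with $f(t)=\|\nabla u(t)\|_2^2$ one arrives at $f'\leqslant Cf^3+\dots$. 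This cubic differential inequality cannot be closed by Gr\"onwall from the weak-level information $\sup_t\|u\|_2^2<\infty$ and $\int_0^T\|\nabla u\|_2^2\,dt<\infty$ alone; in particular, there is no a-priori control on $\int_0^T\|\nabla u\|_2^4\,dt$, which is what would be required to treat the cubic term as a linear Gr\"onwall coefficient. (The $2$D Navier--Stokes identity $\langle(\mathbf u\cdot\nabla)\mathbf u,\Delta\mathbf u\rangle=0$ is a \emph{vector} cancellation and has no analogue for the scalar pairing here.)

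The paper resolves this by splitting the $H^1$-estimate into two ordered steps. First one tests with $-\pl_2^2 u$ and uses the second identity in Lemma~\ref{nonlin-eq-ineq}(c), namely $\langle B(u,u),\pl_2^2 u\rangle_2=0$, so the resulting inequality for $\|\pl_2 u\|_2^2$ is \emph{linear} and gives a global bound $\sup_t\|\pl_2 u(t)\|_2^2\leqslant C_2$ depending only on the data. Only then does one test with $-\pl_1^2 u$: the dangerous contribution $\|\pl_2 u\|_2^2\|\pl_1 u\|_2^4$ is now dominated by $C_2\|\pl_1 u\|_2^2\cdot\|\pl_1 u\|_2^2$, whose Gr\"onwall coefficient $C_2\|\pl_1 u\|_2^2$ is integrable on $(0,T)$ by the weak-level bound. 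This two-step structure, hinging on the special cancellation $\langle B(u,u),\pl_2^2 u\rangle_2=0$, is the missing ingredient in your plan; the single $-\Delta u_n$ test cannot substitute for it.
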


We note that {for $\alpha=\beta=0$} the equations \eqref{main-sys-2}-\eqref{init-cond} are a subsystem of the 2D primitive equations, e.g. \cite[(3.129)-(3.135)]{PeTeZi09}. 
Indeed, in the latter with zero rotation $f=0$ and forcing $F_v=F_S=F_T=0$, the equations preserve vanishing horizontal velocity component $v$ in $x_2$-direction, temperature $T$, salinity $S$. This gives a time-invariant subsystem that equals \eqref{main-sys-2}-\eqref{init-cond}, when replacing \cite[(3.137)]{PeTeZi09} by  horizontally periodic boundary conditions. 
This enables us to prove Theorem~\ref{main-thm} in a similar way as the global well-posedness of the 2D primitive equations (see \cite[Proof~of~Thm~3.3]{PeTeZi09}). 
In the course of proving Theorem~\ref{main-thm}, we will see that the terms $\alpha u$ and $\beta v$ can be readily handled 
and we obtain the crucial a-priori estimates needed for proving the existence of a global attractor in \S\ref{sec-attractor}. 

We briefly outline the proof of Theorem~\ref{main-thm}.
We use the usual Galerkin approximation procedure together with a suitable Aubin-Lions compactness theorem. In a first step, we establish an energy estimate and an a-priori estimate for $u'$ in $\hhpui$, which are shown in the standard manner and lead to the existence of weak solutions. To obtain existence of strong solutions, we then show an a-priori estimate for $\pl_2u$ using the fact that the nonlinearity multiplied by $\pl_2u$ and integrated over $\Omega$ vanishes (see Lemma~\ref{nonlin-eq-ineq}(c) below). Here, there is a slight difference from the proof for the 2D primitive equations, where 
the nonlinearity behaves `well', but does not vanish as neatly as for System \eqref{main-sys-2}-\eqref{init-cond}. This estimate together with an important estimate for the nonlinearity obtained by anisotropic Sobolev inequalities (see Lemma~\ref{mixed-space-ineq}, and also Lemma~\ref{nonlin-eq-ineq}(b)) are then used to obtain an a-priori estimate for $\partial_1 u$. Finally, with these estimates at hand, we prove a-priori estimates for $\Delta u$ and $u'$ in $L_2(\Omega)$, again following the standard approach.

\medskip
When examining the long-term behavior of the strong solutions to \eqref{main-sys-2}-\eqref{init-cond} obtained in Theorem~\eqref{main-thm},
{recall the observation mentioned in the introduction for $K = 0$: A straightforward computation shows that  } \eqref{evol-1} admits the one-dimensional space of solutions spanned by 
\begin{equation}\label{explicit-sol}
u(t,x) := e^{(\alpha-\mu)t}e_{(0,1)}(x) = a_{0,(0,1)} e^{(\alpha- \mu)t}\sin(x_2) \qquad (t \geqslant 0, \ x \in \Omega), 
\end{equation}
where $e_{(0,1)}${, defined in \eqref{ortho-1}, turns out to be} the first eigenfunction of $\Dp$. 
The $L_2(\Omega)$-norm of $u$ decays exponentially over time if $\mu > \alpha$,  remains constant if $\mu = \alpha$ (in this case $u$ is a \textit{steady state}) and grows exponentially and unboundedly over time with growth rate $\alpha-\mu$ if $\mu < \alpha${; we refer to the latter as `grow-up'}. In the latter case, the solutions of \eqref{main-sys-2}-\eqref{init-cond} accumulate infinite energy over time, which is clearly unphysical. 
In fact, all $e_{(0,\ell_2)}$, $\ell_2\in\N$, i.e., all $x_1$-independent eigenfunctions of $\Dp$ generate such linear invariant spaces that grow exponentially for sufficiently large $\alpha>\mu$. 
This suggests that in practice, one should be cautious with the choice of $\alpha$, $\beta$, and $\mu$. 
However, for $\pi|\beta| < \mu$ if $\alpha \leqslant 0$ and $\alpha + \pi |\beta| < \mu$ if $\alpha > 0$, we prove the existence of a global attractor in Theorem~\ref{main-thm-attrac} below, so in this case there is no problem of that kind. While for $\beta=K=0$ there is a sharp dichotomy between a compact attractor and grow-up, it remains unclear whether there are global bounds for $\alpha\in(\mu-\pi|\beta|,\mu)$. 

To formulate this result, we first introduce the semigroup associated with \eqref{evol-1}, which can be defined thanks to Theorem~\ref{main-thm}. Assuming $K \in L_{2,\textnormal{loc}}(0,\infty;\ldiv)$, for any $u_0 \in \hhpu$, it follows from Theorem~\ref{main-thm} that there exists a unique 
\begin{equation}\label{e:semigr}
S(\cdot)u_0:= u \in H_{\textnormal{loc}}^1(0,\infty;\ldiv) \cap C([0,\infty);\hhpu) \cap L_{2.\textnormal{loc}}(0,\infty;\Hhpu)
\end{equation}
satisfying \eqref{evol-1} with respect to $u_0$ and $K$. Let $M(A)$ denote the set of all functions from a set $A$ into itself. {We have that} $S\colon [0,\infty) \to M(\hhpu)$ defines a semigroup, i.e., $S(0)u_0 = u_0$ for all $u_0 \in \hhpu$ and $S(t+s)= S(t)S(s)$ for all $t,s \geqslant 0$. 
Moreover, due to \eqref{contin-depend}, the function $S(t)\colon \hhpu \to \hhpu$ is continuous. 

Our second main result, concerning the global attractor is now as follows.

\begin{thm}\label{main-thm-attrac}
Let $K \in H^1(0,\infty;\hhpui) \cap C_b([0,\infty);\ldiv)$, and assume that $\pi|\beta| < \mu$ if $\alpha \leqslant 0$ and $\alpha + \pi |\beta| < \mu$ if $\alpha > 0$. Then $S$ possesses a compact global attractor in $\hhpu$.
\end{thm}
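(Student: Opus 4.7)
The plan is to verify the three standard criteria for a dissipative semigroup on $\hhpu$ to admit a compact global attractor: continuity of $S(t)$ for every $t\geqslant 0$, existence of a bounded absorbing set in $\hhpu$, and asymptotic compactness of the family $\{S(t)\}_{t\geqslant 0}$. Continuity is already furnished by \eqref{contin-depend} in Theorem~\ref{main-thm}, so only the uniform-in-time a priori estimates remain, and this is precisely where the parameter restriction on $\alpha,\beta,\mu$ is decisive.

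First I would derive an absorbing set in $\ldiv$ by testing \eqref{evol-1} against $u(t)$. After integration by parts, using $\pl_2 v_u = -\pl_1 u$, the $x_1$-periodicity, and $v_u(\cdot,0)=v_u(\cdot,\pi)=0$, the nonlinear terms vanish and one obtains
\[
\tfrac{1}{2}\tfrac{d}{dt}\|u\|_2^2 + \mu\|\nabla u\|_2^2 = \alpha\|u\|_2^2 + \beta\langle v_u,u\rangle_2 + \langle K(t),u\rangle_2.
\]
Combining Poincaré, the bound $\|v_u\|_2\leqslant \pi\|\pl_1 u\|_2$ coming directly from \eqref{def-v} and Cauchy--Schwarz, and Young's inequality, the parameter assumption on $\alpha$ and $\pi|\beta|$ is exactly what is needed to absorb the first two terms on the right-hand side into the dissipation, up to a forcing contribution controlled by $\|K\|_{C_b([0,\infty);\ldiv)}$. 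A standard Grönwall argument then yields a ball in $\ldiv$ that absorbs every bounded set of initial data.

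To upgrade this to an absorbing set in $\hhpu$, I would re-run the anisotropic $H^1$-energy estimate of Theorem~\ref{main-thm} --- testing successively against $\pl_2 u$ and $\pl_1 u$, with the nonlinear contributions controlled by Lemmas~\ref{nonlin-eq-ineq} and~\ref{mixed-space-ineq} --- but apply the uniform Grönwall lemma in place of the classical one, feeding in the $\ldiv$-absorbing bound already obtained. For asymptotic compactness I would then pass to an absorbing set in $\Hhpu$, which embeds compactly into $\hhpu$. The natural route is to differentiate \eqref{evol-1} in time, test against $u'(t)$, and exploit $K\in H^1(0,\infty;\hhpui)$ to produce a uniform bound on $\|u'(t)\|_2$ for large $t$; reinserting into the equation and using Lemma~\ref{mixed-space-ineq} then controls $\|\Delta u(t)\|_2$, hence $\|u(t)\|_{\Hhpu}$.

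The hard part is this last step: the time-differentiated equation contains terms such as $\langle u'\pl_1 u + u\pl_1 u',u'\rangle_2$ and $\langle v_{u'}\pl_2 u + v_u\pl_2 u',u'\rangle_2$, and closing the resulting differential inequality for $\|u'\|_2^2$ against the dissipation requires the anisotropic Sobolev estimate of Lemma~\ref{mixed-space-ineq} combined with the uniform $\hhpu$-bound already in hand and, once more, the smallness conditions on $\alpha$ and $\pi|\beta|$. Once an $\Hhpu$-absorbing set is available, the compact embedding $\Hhpu\hookrightarrow\hhpu$ yields asymptotic compactness, and the classical abstract theorem on the existence of global attractors concludes the proof.
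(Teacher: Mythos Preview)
Your proposal is correct and follows essentially the same route as the paper: an $L_2$-absorbing ball from the energy identity (where the parameter restriction produces a strictly positive effective dissipation constant), then the uniform Gr\"onwall lemma to bootstrap through $\|\pl_2 u\|_2$, $\|\pl_1 u\|_2$, $\|u'\|_2$ and finally $\|\Delta u\|_2$, yielding an $\Hhpu$-absorbing ball and hence asymptotic compactness via the compact embedding $\Hhpu\hookrightarrow\hhpu$. One minor correction: the smallness condition on $\alpha$ and $\pi|\beta|$ is decisive only at the $L_2$ level, where it guarantees exponential decay toward the absorbing ball; the later steps close purely through the uniform Gr\"onwall lemma fed by the integral bounds already obtained, so there is no need to invoke the parameter restriction ``once more'' in the time-differentiated estimate.
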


As mentioned, for $\alpha = \beta = 0$,  \eqref{main-sys-2}-\eqref{init-cond} is a subsystem of the 2D primitive equations. Since these are in turn a subsystem of the 3D primitive equations,  Theorem~\ref{main-thm-attrac} follows for $\alpha=\beta=0$ from known global attractor results for the 3D primitive equations, cf.\ \cite[Thm.~4.3]{YouLi18}. 
For general $\alpha$ and $\beta$ -- as for the global existence -- we rely on standard methods to prove Theorem~\ref{main-thm-attrac}, highlighting how to exactly handle the additional linear terms $\alpha u$ and $\beta v$: we first show the boundedness in time $H^2$ of solutions (Proposition~\ref{main-thm-reg}), then use the uniform Gronwall lemma (Lemma~\ref{uniform-gronwall}) to show that $S$ possesses an absorbing ball in $\Hhpu$ norm (Proposition~\ref{main-thm-reg})
and finally infer the existence of the global attractor (Theorem~\ref{thm-attractor}).

\section{Preliminaries}\label{sec-prelim}
In this section we collect some preliminary results.

Throughout this paper we denote by $\Dp$ the Laplacian on $\ldiv$, i.e., $\Dp$ is an unbounded self-adjoint operator on $\ldiv$ defined by
\begin{align*}
\dom(\Dp)&:= \{u \in \hhpu; \ \Delta u \in L_2(\Omega)\}
, \\
\Dp u &:= \Delta u. 
\end{align*}
We observe that $(e_{\ell})_{\ell \in \mathcal{Z}}$ is an orthonormalbasis of $\ldiv$ constisting of eigenvectors of $-\Dp$ with corresponding eigenvalue 
given by $\lambda_{\ell}:= |\ell|^2$. 
Furthermore, $e_{\ell} \in \chpu$ and $\lambda_{\ell} \geqslant 1 = \lambda_{(0,1)}$ for all $\ell \in \mathcal{Z}$.
Using these facts, yields 
the Poincar\'e inequality 
\begin{equation}\label{poin-1}
\|u\|_2 \leqslant \|\nabla u\|_2 \qquad (u \in \hhpu)
\end{equation}
and the inequality
\begin{equation}\label{poin-2}
\|\nabla u\|_2 \leqslant \|\Delta u\|_2 \qquad (u \in \Hhpu);
\end{equation}
in particular, the mapping $\Hhpu \ni u \mapsto \|\Delta u\|_2 \in [0,\infty)$ defines an equivalent norm on $\Hhpu$. Thus, it is also not too difficult to show that $\dom(\Delta_{\textnormal{div}}) = \Hhpu$.

Next we want to prove 
estimates for the nonlinearity 
\[
B(u,v):= u \pl_1 v + v_u\pl_2v  \qquad (u,v \in H^1(\Omega)) 
\]
appearing in \eqref{evol-1}, that will be needed in Section~\ref{sec-strsol} to derive appropriate a-priori estimates for the proof of Theorem~\ref{main-thm}. 
{Recall} the definition of $v_u$ { from \eqref{def-v}, and}
note that $v_u \in H^1(\Omega) \sq L_4(\Omega)$
and thus $B(u,v) \in L_2(\Omega)$ for all $u,v \in H^2(\Omega)$.

\begin{lem}\label{nonlin-eq-ineq}
(a) Let $u,v,w \in \hhpu$. Then
\[
\int_{\Omega} |B(u,v)w| \dx \leqslant M_B \|\nabla u\|_2\|\nabla v\|_2\|\nabla w\|_2
\]
for some $M_B \geqslant 0$ independent of $u$, $v$ and $w$.

\noindent (b) Let $u,v \in \Hhpu$ and $w \in L_2(\Omega)$. Then
\begin{align*}
\int_{\Omega} |B(u,v)w| \dx &\leqslant M_B'(\|u\|_2^{1/2}\|\nabla u\|_2^{1/2} \|\pl_1v\|_2^{1/2}\|\pl_1\nabla v\|_2^{1/2} \\ &\mkern20mu + \|\pl_1 u\|_2\|\pl_2 v\|_2^{1/2}(\|\pl_2v\|_2^{1/2} + \|\pl_1 \nabla v\|_2^{1/2})) \|w\|_2 \\
&\leqslant M_B'' \|\nabla u\|_2\|\nabla v\|_2^{1/2}\|\Delta v\|_2^{1/2} \|w\|_2 \leqslant M_B'' \|\nabla u\|_2\|\Delta v\|_2 \|w\|_2
\end{align*}
for some $M_B', M_B''\geqslant 0$ independent of $u$, $v$ and $w$.

\noindent (c) Let $u,v \in \hhpu$. Then $\int_{\Omega} B(u,v)v \dx = 0$. Moreover, if $u \in \Hhpu$, then 
$\langle B(u,u),\pl_2^2u \rangle_2 = 0$.
\end{lem}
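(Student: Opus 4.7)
The plan is to split $B(u,v) = u\pl_1 v + v_u \pl_2 v$ and handle each summand separately, using two preparations throughout. First, the definition \eqref{def-v} and Cauchy--Schwarz in $x_2$ yield the pointwise bound
\[
|v_u(x_1,x_2)| \leq \sqrt{\pi}\,\|\pl_1 u(x_1,\cdot)\|_{L_2(0,\pi)},
\]
which together with $\pl_2 v_u = -\pl_1 u$ and $v_u(\cdot,0)=v_u(\cdot,\pi)=0$ controls $v_u$ without ever differentiating it in $x_1$. Second, for $f$ periodic in $x_1$ I will use the anisotropic mixed-norm Sobolev estimate
\[
\|f\|_{L_\infty(-\pi,\pi;\,L_2(0,\pi))}^2 \leq C\bigl(\|f\|_2^2 + \|f\|_2\|\pl_1 f\|_2\bigr),
\]
obtained by noting that $g(x_1):=\|f(x_1,\cdot)\|_{L_2(0,\pi)}^2$ satisfies $|g'(x_1)| \leq 2\|f(x_1,\cdot)\|_{L_2(0,\pi)}\|\pl_1 f(x_1,\cdot)\|_{L_2(0,\pi)}$, so that $x_1$-periodicity combined with the fundamental theorem of calculus bounds $\sup g$ by the average of $g$ plus $\int|g'|\,dx_1$.

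For part (a), the plan is to bound $\int u\pl_1 v\cdot w\dx$ by H\"older as $\|u\|_4\|\pl_1 v\|_2\|w\|_4$ and invoke the 2D Ladyzhenskaya inequality $\|h\|_4\leq C\|h\|_2^{1/2}\|\nabla h\|_2^{1/2}$ together with \eqref{poin-1}. For $\int v_u\pl_2 v\cdot w\dx$, the pointwise bound on $v_u$, Cauchy--Schwarz in $x_2$, and H\"older in $x_1$ with exponents $(2,2,\infty)$ reduce matters to $C\|\pl_1 u\|_2\|\pl_2 v\|_2\|w\|_{L_\infty(-\pi,\pi;L_2(0,\pi))}$, which the anisotropic embedding and \eqref{poin-1} close. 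Part (b) follows the same template with a different distribution of exponents: for $\int u\pl_1 v\cdot w\dx$ I apply H\"older with exponents $(4,4,2)$ and Ladyzhenskaya on both $L_4$-factors, producing the first summand in the statement. For $\int v_u\pl_2 v\cdot w\dx$ I again use the pointwise bound on $v_u$ and Cauchy--Schwarz in $x_2$, but now apply H\"older in $x_1$ with exponents $(2,\infty,2)$, placing the mixed norm on $\pl_2 v$; the anisotropic embedding applied to $\pl_2 v$ then produces the second summand. The two coarser bounds follow from \eqref{poin-1}, \eqref{poin-2} and the elliptic estimate $\|D^2 v\|_2\leq C\|\Dp v\|_2$ on $\Hhpu$, which is immediate from the spectral expansion in the orthonormal basis $(e_{2,\ell})_{\ell \in \mathcal{Z}}$.

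Part (c) is a direct integration-by-parts computation. In $\int_\Omega B(u,v)v\dx$, integration by parts in $x_1$ (using $x_1$-periodicity) turns $\int u\pl_1 v\cdot v\dx$ into $-\tfrac{1}{2}\int \pl_1 u\cdot v^2\dx$, while integration by parts in $x_2$ (using $v_u(\cdot,0)=v_u(\cdot,\pi)=0$ and $\pl_2 v_u=-\pl_1 u$) turns $\int v_u\pl_2 v\cdot v\dx$ into $+\tfrac{1}{2}\int \pl_1 u\cdot v^2\dx$, and the two cancel. For $\langle B(u,u),\pl_2^2 u\rangle_2$, one integration by parts in $x_2$ on $\int u\pl_1 u\,\pl_2^2 u\dx$ (boundary terms vanishing since $u=0$ at $x_2=0,\pi$) followed by one in $x_1$ yields $-\tfrac{1}{2}\int \pl_1 u(\pl_2 u)^2\dx$, while a single integration by parts in $x_2$ on $\int v_u\pl_2 u\,\pl_2^2 u\dx$ yields $+\tfrac{1}{2}\int \pl_1 u(\pl_2 u)^2\dx$, and again the two cancel.

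The hard part will be the apparent regularity mismatch in (a) and (b): $v_u$ does not belong to $H^1(\Omega)$ when $u$ is only in $\hhpu$, since $\pl_1 v_u$ would require $u \in \Hhpu$. The resolution is precisely never to differentiate $v_u$ in $x_1$ and instead to exploit its pointwise $L_\infty$-bound in $x_2$, shifting the remaining regularity onto the other factors via the anisotropic mixed-norm embedding established in the first paragraph.
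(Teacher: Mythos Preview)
Your proposal is correct and follows essentially the same approach as the paper: the same splitting of $B(u,v)$, the same pointwise bound \eqref{v-linfty-ineq} on $v_u$, the same anisotropic mixed-norm estimate (the paper's Lemma~\ref{mixed-space-ineq}), the same H\"older/Ladyzhenskaya distribution of exponents in (a) and (b), and the same integration-by-parts cancellations in (c). The only cosmetic difference is that the paper first carries out all computations for $u,v,w \in \lin\{e_\ell; \ell \in \mathcal{Z}\}$ and then passes to the limit by density, whereas you work directly in the stated spaces; since you never differentiate $v_u$ in $x_1$, this causes no trouble.
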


\begin{rem} 
The estimate given in Lemma~\ref{nonlin-eq-ineq}(a) {is sufficient for our purposes, but we remark that it} can be improved, as we shall see in the proof of Lemma~\ref{nonlin-eq-ineq}(a) below; e.g., one even has that
\begin{align*}
\int_{\Omega} |B(u,v)w| \dx &\leqslant c(\|u\|_2^{1/2}\|\nabla u\|_2^{1/2} \|\pl_1v\|_2 \|w\|_2^{1/2} \|\nabla w\|_2^{1/2} \\ 
&\mkern20mu + \|\pl_1u\|_2\|\pl_2v\|_2 \|w\|_2^{1/2}(\|w\|_2^{1/2} + \|\pl_1 w\|_2^{1/2})) \qquad (u,v,w \in \hhpu)
\end{align*}
for some $c \geqslant 0$. 
\end{rem}

In preparation of the proof of Lemma~\ref{nonlin-eq-ineq}, we will now derive three important properties of the term $v_u$. The first property is the following rule of integration by parts:
\begin{equation}\label{int-parts-2}
\int_{\Omega} v_u\pl_2v\dx = \int_{\Omega} (\pl_1u)v \dx \qquad (u \in \hhpu, \ v \in \hhp);
\end{equation}
Indeed, if $u \in \chpu$ and $v \in \chp$, the equality \eqref{int-parts-2} directly follows from the fact that $v_u(x_1,\cdot) \in H_0^1(0,\pi)$ for a.e. $x_1 \in (-\pi,\pi)$, $\pl_2 v_u = - \pl_1 u$ and integration by parts. The general case is then immediately obtained by approximation. 

Furthermore, using H\"older's inequality, for all $u \in \hhpu$ we estimate
\begin{align}
\|v_u\|_2 &\leqslant \biggl( \int_{-\pi}^\pi \int_0^\pi \biggl( \int_0^\pi|\pl_1u(x_1,x_2')| \,dx_2' \biggr)^2 \,dx_2 \,dx_1\biggr)^{1/2} \nonumber \\
&\leqslant \sqrt{\pi}\biggl( \int_{-\pi}^\pi \int_0^\pi  |\pl_1u(x_1,x_2')|^2 \,dx_2' \int_0^\pi \ind_{(-\pi,\pi)}^2 \,dx_2'  \,dx_1\biggr)^{1/2} = \pi \|\pl_1u\|_2\label{v-l2-ineq}
\end{align}
and
\begin{equation}\label{v-linfty-ineq}
\|v_u(x_1,\cdot)\|_{L_{\infty}(0,\pi)} \leqslant \int_0^\pi |\pl_1u(x_1,x_2)| \,dx_2 \leqslant \sqrt{\pi}\|\pl_1 u(x_1,\cdot)\|_{L_2(-\pi,\pi)}
\end{equation}
for a.e. $x_1 \in (-\pi,\pi)$; these are the second and third property of $v_u$, respectively. 

For the proof of Lemma~\ref{nonlin-eq-ineq}, we also make use of the Ladyzhenskaya inequality
\begin{equation}\label{Lady-ineq}
\|u\|_4 \leqslant c_{\textnormal L}\|u\|_2^{1/2}\|\nabla u\|_2^{1/2} \qquad (u \in \hhpu),    
\end{equation}
with some $c_{\textnormal L} > 0$, which follows from \eqref{poin-1} and \cite[Theorem~5.8]{Ada75}. Moreover, the equality 
\begin{equation}\label{eq-Delta}
\|\nabla \pl_1u\|_2^2 + \|\nabla \pl_2 u\|_2^2 = \|\Delta u\|_2^2 \qquad (u \in \Hhpu)
\end{equation}
will prove to be useful; this directly 
follows from integration by parts for $u \in \lin\{e_{\ell}; \ \ell \in \mathcal{Z}\}$,
and by approximation for $u \in \Hhpu$.
The next lemma states an anisotropic Sobolev inequality, which has been proven in a similar form in \cite[Lemma~2.3]{CaLiTi17}. Our proof follows the same argumentation.

\begin{lem}\label{mixed-space-ineq}
 Let $u \in L_2(\Omega)$, and assume that $\pl_1 u \in L_2(\Omega)$. Then
 \[
\|u(x_1,\cdot)\|_{L_2(0,\pi)} \leqslant  \|u\|_2^{1/2} (\|u\|_2^{1/2} + \sqrt{2}\|\partial_1 u\|_2^{1/2}) \qquad (\text{a.e. } x_1 \in (-\pi,\pi)).
 \]
\end{lem}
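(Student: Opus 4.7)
The plan is to reduce the two-variable estimate to a one-variable fundamental-theorem-of-calculus bound applied in the $x_1$-direction, then absorb the $x_1$-dependence by averaging rather than taking a pointwise bound.

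First, I would reduce to the smooth case. Since the quantity $\|u(x_1,\cdot)\|_{L_2(0,\pi)}^2$ and its bound depend continuously on $u$ in the norm $\|u\|_2 + \|\partial_1 u\|_2$, it suffices to prove the inequality for $u \in C^\infty(\overline{\Omega})$ (with, say, periodic extension in $x_1$ — the boundary behaviour in $x_2$ is irrelevant here) and then pass to the limit by density.

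Second, introduce $f(x_1) := \|u(x_1,\cdot)\|_{L_2(0,\pi)}^2 = \int_0^\pi u(x_1,x_2)^2 \, dx_2$. For smooth $u$ we have $f'(x_1) = 2\int_0^\pi u(x_1,x_2)\,\partial_1 u(x_1,x_2) \, dx_2$, so for any $x_1, y_1 \in (-\pi,\pi)$ the fundamental theorem of calculus together with the Cauchy--Schwarz inequality in $L_2(\Omega)$ gives
\begin{equation*}
f(x_1) \leqslant f(y_1) + 2\int_\Omega |u(s,x_2)||\partial_1 u(s,x_2)|\,ds\,dx_2 \leqslant f(y_1) + 2\|u\|_2\|\partial_1 u\|_2.
\end{equation*}
Averaging this inequality over $y_1 \in (-\pi,\pi)$ and noting $\int_{-\pi}^\pi f(y_1)\,dy_1 = \|u\|_2^2$, I obtain
\begin{equation*}
f(x_1) \leqslant \tfrac{1}{2\pi}\|u\|_2^2 + 2\|u\|_2\|\partial_1 u\|_2 \leqslant \|u\|_2^2 + 2\|u\|_2\|\partial_1 u\|_2.
\end{equation*}

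Third, I would take square roots using the elementary bound $\sqrt{a+b}\leqslant \sqrt{a}+\sqrt{b}$ for $a,b\geqslant 0$, which yields
\begin{equation*}
\|u(x_1,\cdot)\|_{L_2(0,\pi)} \leqslant \|u\|_2 + \sqrt{2}\,\|u\|_2^{1/2}\|\partial_1 u\|_2^{1/2} = \|u\|_2^{1/2}\bigl(\|u\|_2^{1/2} + \sqrt{2}\,\|\partial_1 u\|_2^{1/2}\bigr),
\end{equation*}
which is the desired bound for a.e.\ $x_1$.

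There is no real obstacle here; the only subtle point is the approximation argument, since the assumption is merely $u, \partial_1 u \in L_2(\Omega)$ without assumed regularity in $x_2$. One handles this by mollifying in $x_1$ only (which preserves the bound), so that the resulting functions are smooth in $x_1$ and still square-integrable in $x_2$, and then passing to the limit in $L_2(\Omega)$ norm; the pointwise bound in $x_1$ holds for a.e.\ $x_1$ by Fubini after extracting a subsequence converging pointwise a.e.\ in $x_1$ in the $L_2(0,\pi)$-norm.
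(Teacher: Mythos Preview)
Your proof is correct and follows essentially the same approach as the paper: both introduce $f(x_1)=\|u(x_1,\cdot)\|_{L_2(0,\pi)}^2$, compute $f'(x_1)=2\int_0^\pi u\,\partial_1 u\,dx_2$, and bound $\|f\|_{L_\infty}$ by $\|f\|_{L_1}+\|f'\|_{L_1}$ (you obtain this via the fundamental theorem of calculus plus averaging in $y_1$, the paper quotes it as the $W_1^1\hookrightarrow L_\infty$ embedding), arriving at the identical inequality $f(x_1)\leqslant\|u\|_2^2+2\|u\|_2\|\partial_1 u\|_2$. The only minor difference is that the paper works directly by observing $f\in H^1(-\pi,\pi)$, whereas you reduce to smooth $u$ first and pass to the limit; both are fine.
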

\begin{proof}
We observe that $((-\pi,\pi) \ni x_1 \mapsto \|u(x_1,\cdot)\|_{L_2(0,\pi)}^2 \in \R) \in H^1(-\pi,\pi) \subseteq W_1^1(-\pi,\pi)$ with 
\[
 \frac{d}{dx_1}\|u(x_1,\cdot)\|_{L_2(0,\pi)}^2 = 2\int_0^\pi u(x_1,x_2)\pl_1u(x_1,x_2) \,dx_2 
 \]
 for a.e. $x_1 \in (-\pi,\pi)$. Now, using the fact that 
 \[
 \|f\|_{L_{\infty}(-\pi,\pi)} \leqslant \int_{-\pi}^\pi |f| \dx_1 + \int_{-\pi}^\pi |f'| \,dx_1 \qquad (f \in W_1^1(-\pi,\pi)), 
 \]
we have for a.e. $x_1' \in (-\pi,\pi)$ that 
\begin{align*}
&\|u(x_1',\cdot)\|_{L_2(0,\pi)}^2 \leqslant \int_{-\pi}^\pi \|u(x_1,\cdot)\|_{L_2(0,\pi)}^2 \,dx_1 + \int_{-\pi}^\pi \bigg|\frac{d}{dx_1}\|u(x_1,\cdot)\|_{L_2(0,\pi)}^2 
\bigg|\,dx_1  \\
&\leqslant \|u\|_2^2 + 2\int_{\Omega} |u||\pl_1 u| \dx \leqslant  \|u\|_2(\|u\|_2 + 2\|\pl_1 u\|_2). \tag*{\qedhere}
\end{align*}
\end{proof}
With all the above preparations we are now able to prove  Lemma~\ref{nonlin-eq-ineq}. 

\begin{proof}[Proof of Lemma~\ref{nonlin-eq-ineq}]
In the following proofs of (a)-(c) we assume that $u,v,w \in \lin \{e_{\ell}; \ \ell \in \mathcal{Z}\}$ and show the asserted equations and inequalities for these functions; 
the corresponding equations and inequalities then immediately follow by approximation. (Here one uses parts (a) and (b) for the approximation in part (c)).

(a) By H\"older's inequality, \eqref{Lady-ineq} and \eqref{poin-1} we have
\begin{align*}
 \int_{\Omega} |u (\pl_1 v)w| \dx  &\leqslant \|u\|_4 \|\pl_1v\|_2 \|w\|_4 \leqslant c_{\textnormal L}^2 \|u\|_2^{1/2}\|\nabla u\|_2^{1/2} \|\pl_1v\|_2 \|w\|_2^{1/2} \|\nabla w\|_2^{1/2} \\ 
&\leqslant c_{\textnormal L}^2\|\nabla u\|_2 \|\nabla v\|_2\|\nabla w\|_2.   
\end{align*}
Furthermore, with $f(x_1):= \|w(x_1,\cdot)\|_{L_2(0,\pi)} \ (\text{a.e. } x_1 \in (-\pi,\pi))$, it follows from \eqref{v-linfty-ineq} and Lemma~\ref{mixed-space-ineq} that
\begin{align*}
\int_{\Omega} |v_u(\pl_2 v) w| \dx  &\leqslant \int_{-\pi}^\pi \|v_u(x_1,\cdot)\|_{L_{\infty}(0,\pi)} \|\pl_2v(x_1,\cdot)\|_{L_2(0,\pi)} \|w(x_1,\cdot)\|_{L_2(0,\pi)} \,dx_1 \\
&\leqslant \sqrt{\pi} \int_{-\pi}^\pi 
\|\pl_1 u(x_1,\cdot)\|_{L_2(0,\pi)} \|\pl_2v(x_1,\cdot)\|_{L_2(0,\pi)} f(x_1) \,dx_1 \\
&\leqslant \sqrt{\pi} \|\pl_1u\|_2\|\pl_2v\|_2 \|f \|_{L_{\infty}(0,\pi)} \\
&\leqslant \sqrt{\pi}\|\pl_1u\|_2\|\pl_2v\|_2 \|w\|_2^{1/2}(\|w\|_2^{1/2} + \sqrt{2}\|\pl_1 w\|_2^{1/2}) \\
&\leqslant \sqrt{\pi}(1 + \sqrt{2})\|\nabla u\|_2 \|\nabla v\|_2 \|\nabla w\|_2.
\end{align*}

(b) Thanks to \eqref{Lady-ineq}, \eqref{poin-1} and \eqref{eq-Delta}, we obtain 
\begin{align*}
\int_{\Omega} |u (\pl_1v) w| \dx \leqslant \|u\|_4 \|\pl_1v\|_4 \|w\|_2 &\leqslant c_{\textnormal L}^2\|u\|_2^{1/2}\|\nabla u\|_2^{1/2}\|\pl_1v\|_2^{1/2}\|\pl_1\nabla v\|_2^{1/2} \|w\|_2 \\
&\leqslant c_{\textnormal L}^2 \|\nabla u\|_2\|\nabla v\|_2^{1/2}\|\Delta v\|_2^{1/2} \|w\|_2.
\end{align*}
With $f(x_1):= \|\pl_2v(x_1,\cdot)\|_{L_2(0,\pi)} \ (\text{a.e } x_1 \in (-\pi,\pi))$, and by \eqref{v-linfty-ineq}, Lemma~\ref{mixed-space-ineq}, \eqref{poin-2}, \eqref{poin-1} and \eqref{eq-Delta}, we further have
\begin{align*}
\int_{\Omega} |v_u (\pl_2 v)w| \dx  &\leqslant \sqrt{\pi}\int_{-\pi}^\pi \|\pl_1u(x_1,\cdot)\|_{L_2(0,\pi)} f(x_1) \|w(x_1,\cdot)\|_{L_2(0,\pi)} \,dx_1 \\
&\leqslant \sqrt{\pi}\|\pl_1u\|_2 \|f\|_{L_{\infty}(0,\pi)} \|w\|_2 \\
&\leqslant \sqrt{\pi} \|\pl_1u\|_2 \| \pl_2v\|_2^{1/2}(\|\pl_2v\|_2^{1/2} + \sqrt{2}\|\pl_1\pl_2 v\|_2^{1/2})\|w\|_2 \\
&\leqslant \sqrt{\pi} \|\pl_1u\|_2 \|\pl_2v\|_2^{1/2}(\|\pl_2v\|_2^{1/2} + \sqrt{2}\|\pl_1\nabla v\|_2^{1/2})\|w\|_2 \\
&\leqslant \sqrt{\pi}(1 + \sqrt{2})  \|\nabla u\|_2 \|\nabla v\|_2^{1/2}\|\Delta v\|_2^{1/2}\|w\|_2 .
\end{align*}

(c) Since $|v|^2 \in \hhp$, we can use integration by parts and \eqref{int-parts-2} to obtain 
\begin{align*}
\int_{\Omega} B(u,v)v  \dx &= \int_{\Omega} u (\pl_1 v) v + v_{u} (\pl_2 v)v \dx \\ 
&=  \frac{1}{2} \int_{\Omega} u \pl_1(|v|^2) + v_{u}\pl_2(|v|^2) \dx = 0.
\end{align*}
Moreover, since $|\pl_2u|^2 \in \hhp$, it again follows from integration by parts and \eqref{int-parts-2} that 
\begin{align*}
\langle B(u,u),\pl_2^2u \rangle_2 &= \int_{\Omega} u\pl_1u \pl_2^2u + v_{u} \pl_2 u \pl_2^2u \dx \\
&= - \int_{\Omega} |\pl_2u|^2\pl_1u + u\pl_1\pl_2u \pl_2 u - \frac{1}{2}v_{u} \pl_2 |\pl_2u|^2 \dx \\
&= - \int_{\Omega} |\pl_2u|^2\pl_1u + \frac{1}{2} u \pl_1|\pl_2u|^2 - \frac{1}{2}\pl_1u |\pl_2u|^2 \dx = 0.
\end{align*}
This completes the proof.
\end{proof}

\section{Existence and uniqueness of global strong solutions}\label{sec-strsol}

In this section we prove Theorem~\ref{main-thm}, our first main result. Along with this proof we obtain  useful equalities and inequalities for the proof of Theorem \ref{main-thm-attrac}. 

\begin{proof}[Proof of Theorem \ref{main-thm}]
The existence of global weak and strong solutions of \eqref{evol-1} is proven by the usual Galerkin approximation procedure, and by passing to the limit using an appropriate compactness theorem of Aubin-Lions (cf. \cite[Lemma~7.7]{Rou00}), after suitable a-priori estimates have been established. To do so, fix $n \in \N$, let $(e_j)_{j \in \N}$ be a counting of the orthonormalbasis $(e_{\ell})_{\ell \in \mathcal{Z}}$ of $\ldiv$ given by \eqref{ortho-1}-\eqref{ortho-2}, $V_n:= \lin \{e_1,\dots,e_n \}$ and let $P_n$ be the orthogonal projection from $V_n$ along $\ldiv$. Let further $u_n$ be the unique solution of \eqref{evol-1} obtained in the $n$-th approximation step of the Galerkin method with respect to the function spaces $V_n$, i.e., $u_n \in H^1(0,T;V_n)$ such that $u_n(0) = \sum_{j=1}^n \langle u_0,e_j \rangle_2 e_j$ and 
\begin{align}
&\langle u_n'(t),v \rangle_2 + \mu\int_{\Omega}\nabla u_n(t) \cdot \nabla v \dx + \langle u_n(t)\pl_1u_n(t) +v_{u_n(t)}\pl_2u_n(t), v \rangle_2 \tag*{} \\ 
&= \alpha\langle u_n(t),v \rangle_2 + \beta \langle v_{u_n(t)},v \rangle_2 + \langle K(t),v \rangle_{H_{\textnormal{div}}^{-1}(\Omega),\hhpu} \qquad (v \in V_n) \label{Gal-approx-eq} 
\end{align}
for a.e. $t \in (0,T)$; in particular, $\|u_n(0)\|_2 \leqslant \|u_0\|_2$ and $u_n(t) \in \chpu$ for a.e. $t \in (0,T)$, because of the fact that $V_n \sq \chpu$; see paragraph above \eqref{poin-1} for this fact. 
In a first step we will now show an estimate for $\sup_{t \in [0,T]}\|u_n(t)\|_2$. For simplicity we omit the index $n$ from now on.

\medskip
(i) \textbf{$\|u(t)\|_2$-estimate}: Putting $v = u(t)$ in~\eqref{Gal-approx-eq} and using Lemma~\ref{nonlin-eq-ineq}(c), we obtain 
\begin{equation}\label{attrac-eq-1}
\frac{1}{2}\frac{d}{dt}\|u(t)\|_2^2 + \mu \|\nabla u(t)\|_2^2 = \alpha \|u(t)\|_2^2 + \beta \langle v_{u(t)},u(t)\rangle_2 + \langle K(t),u(t) \rangle_{H_{\textnormal{div}}^{-1}(\Omega),\hhpu}
\end{equation}
for a.e. $t \in (0,T)$. Hence, using the Cauchy-Schwarz inequality and \eqref{v-l2-ineq} 
and then the Peter-Paul inequality (i.e. $ab \leqslant \frac{1}{2}(\gamma a^2 + \gamma^{-1}b^2)$ for all $a,b \in \R$, $\gamma > 0$), 
we find some $c_1 \geqslant 0$ only depending on $\alpha$, $\beta$ and $\mu$ such that 
\begin{align}
\frac{1}{2} \frac{d}{dt} \|u(t)\|_2^2 + \mu \|\nabla u(t)\|_2^2 &\leqslant \alpha  \|u(t)\|_2^2 + \pi|\beta|\|\pl_1u(t)\|_2\|u(t)\|_2 + \|K(t)\|_{H_{\textnormal{div}}^{-1}(\Omega)}\|\nabla u(t)\|_2 \label{e:attr-est} \\
&\leqslant c_1\|u(t)\|_2^2 + c_1\|K(t)\|_{H_{\textnormal{div}}^{-1}(\Omega)}^2 + \frac{\mu}{2}\|\nabla u(t)\|_2^2 \label{e:attr-est2}
\end{align}
for a.e. $t \in (0,T)$. Subtracting this equation by $\frac{\mu}{2}\|\nabla u(t)\|_2^2$, multiplying by $2$ and integrating the resulting inequality from $0$ to $t$ gives
\begin{align}
&\|u(t)\|_2^2-\|u(0)\|_2^2 + \mu \int_0^t \|\nabla u(s)\|_2^2 \,ds  \tag*{} \\ 
&\leqslant 2c_1 \int_0^t\|u(s)\|_2^2 \,ds + 2c_1\int_0^T \|K(s)\|_{H_{\textnormal{div}}^{-1}(\Omega)}^2 \,ds \qquad (t \in [0,T]). \label{l2-ineq-1} 
\end{align}
Now, Gronwall's inequality, the fact that $\|u(0)\|_2 \leqslant \|u_0\|_2$ and the assumption imply that
\begin{equation}\label{u-gronwall}
\|u(t)\|_2^2 \leqslant (\|u_0\|_2^2 + 2c_1\|K\|_{L_2(0,T;H_{\textnormal{div}}^{-1}(\Omega))}^2)e^{2c_1t}\leqslant (1+2c_1)r^2e^{2c_1T}=:C_1;   
\end{equation}
note that $C_1$ only depends on $\alpha$, $\beta$, $\mu$, $T$ and $r$. Together with~\eqref{l2-ineq-1} it follows that
\begin{equation}\label{h1-ineq-1}
\int_0^T \|\nabla u(s)\|_2^2 \,ds \leqslant \mu^{-1}(\|u_0\|_2^2 + 2c_1\|K\|_{L_2(0,T;H_{\textnormal{div}}^{-1}(\Omega))}^2)e^{2c_1T}=\mu^{-1}C_1;
\end{equation}
Thus, {in order} to obtain the existence of weak solutions, i.e., the first assertion of Theorem~\ref{main-thm}, it remains to show an a-priori estimate for $u'$ in $L_1(0,T;H_{\textnormal{div}}^{-1}(\Omega))$. 

\medskip
(ii) \textbf{$\|u'\|_{L_1(0,T;H_{\textnormal{div}}^{-1}(\Omega))}$-estimate}: {Note the estimate $\|\nabla Pv\|_2 = \|Pv\|_{1,\textnormal{div}} \leqslant \|v\|_{1,\textnormal{div}} \ (v \in \hhpu)$, which is due to the fact that $P|_{\hhpu}$ is an orthogonal projection onto $V$ along $(\hhpu, \|\cdot\|_{1,\textnormal{div}}$) and thus is a contraction with respect to the $\|\cdot\|_{\textnormal{div}}$-norm; for the definition of $\|\cdot\|_{\textnormal{div}}$ recall \eqref{def-norm-div}.} Then, using that $u'(t) \in V$ for the first equality, \eqref{Gal-approx-eq} for the first inequality and Lemma~\ref{nonlin-eq-ineq}(a), \eqref{poin-1} and \eqref{v-l2-ineq} for the second inequality, we obtain
\begin{align*}
|\langle u'(t),v \rangle_2| &= |\langle u'(t),Pv \rangle_2| \leqslant \mu\|\nabla u(t)\|_2 \|\nabla Pv\|_2 + |\langle B(u(t),u(t)),Pv \rangle_2| \\
&\mkern20mu + |\alpha|\|u(t)\|_2\|Pv\|_2 + |\beta|\|v_{u(t)}\|_2\|Pv\|_2 + \|K(t)\|_{\hhpui}\|\nabla Pv\|_2  \\ 
&\leqslant ((\mu + |\alpha| + \pi|\beta|)\|\nabla u(t)\|_2 + M_B\|\nabla u(t)\|_2^2 + \|K(t)\|_{\hhpui}){\|v\|_{1,\textnormal{div}} }
\end{align*}
for all $v \in \hhpu$, which in turn implies that
\begin{align*}
\|u'(t)\|_{\hhpui} \leqslant c_2\|\nabla u(t)\|_2 + M_B\|\nabla u(t)\|_2^2 + \|K(t)\|_{\hhpui} 
\end{align*}
for a.e. $t \in (0,T)$; here $c_2:= \mu + |\alpha| + \pi|\beta|$. Now, integrating this inequality from $0$ to $T$, using H\"older's inequality inequality for the first inequality, the Peter-Paul inequality for the second inequality and \eqref{h1-ineq-1} for the third inequality, we obtain
\begin{align*}
\|u'\|_{L_1(0,T;\hhpui)} &\leqslant c_2\sqrt{T}\|\nabla u\|_{L_2(0,T;L_2(\Omega))} + M_B\|\nabla u\|_{L_2(0,T;L_2(\Omega))}^2 + \sqrt{T}\|K\|_{L_2(0,T;\hhpui)}  \\ 
&\leqslant \bigg(M_B + \frac{c_2^2}{2}\bigg)\|\nabla u\|_{L_2(0,T;L_2(\Omega))}^2 + \frac{1}{2}\|K\|_{L_2(0,T;\hhpui)}^2 + T \\
&\leqslant \mu^{-1}\bigg(M_B + \frac{c_2^2}{2} \bigg)C_1 + \frac{1}{2}\|K\|_{L_2(0,T;\hhpui)}^2 + T. 
\end{align*}
Consequently, we {infer} 
the existence of weak solutions.

\medskip
{Next,} we show the existence of strong solutions. For this, we assume that $u_0 \in \hhpu$ and $K \in L_2(0,T;\ldiv)$ until the end of this proof. Note that $\|\nabla u(0)\|_2 \leqslant \|\nabla u_0\|_2$; recall that here $u=u_n$ is the Galerkin approximation. 
Again we only establish necessary a-priori estimates, and start with proving an estimate for
$\sup_{t \in [0,T]}\|\pl_2 u(t)\|_2$.

\medskip
(iii) \textbf{$\|\pl_2u(t)\|_2$-estimate}: Observe that $\pl_2^2v \in V$ for all $v \in V$. Hence, $\pl_2^2 u(t) \in V$ for a.e. $t \in (0,T)$ since $u \in H^1(0,T;V)$. Now, by putting $v = - \pl_2^2u(t)$ in \eqref{Gal-approx-eq}, and using integration by parts, Lemma~\ref{nonlin-eq-ineq}(c) and \eqref{int-parts-2}, we arrive at
\begin{equation}\label{attrac-eq-2}
\frac{1}{2}\frac{d}{dt} \|\pl_2u(t)\|_2^2 + \mu \|\pl_2\nabla u(t)\|_2^2 = -\alpha \|\pl_2 u(t)\|_2^2 - \beta \langle \pl_1u(t),\pl_2u(t)\rangle_2 - \langle K(t),\pl_2^2u(t) \rangle_2
\end{equation}
for a.e. $t \in (0,T)$. Applying the Cauchy-Schwarz-inequality and the Peter-Paul inequality to the right-hand side of this equation, we find some $c_3 \geqslant 0$ only depending on $\alpha$, $\beta$ and $\mu$ such that
\[
\frac{1}{2} \frac{d}{dt} \|\pl_2u(t)\|_2^2 + \mu \|\pl_2\nabla u(t)\|_2^2 \leqslant c_3\|\nabla u(t)\|_2^2 + c_3\|K(t)\|_2^2 + \frac{\mu}{2}\|\pl_2\nabla u(t)\|_2^2 
\]
for a.e. $t \in (0,T)$. Thus, subtracting this inequality by $\frac{\mu}{2}\|\pl_2\nabla u(t)\|_2^2$, multiplying by $2$ and integrating the resulting inequality from $0$ to $t$ yields
\begin{align*}
&\|\pl_2u(t)\|_2^2-\|\pl_2u(0)\|_2^2 + \mu \int_0^t \|\pl_2\nabla u(s)\|_2^2\,ds \leqslant 2c_3\int_0^t \|\nabla u(s)\|_2^2 \,ds + 2c_3\int_0^T \|K(s)\|_2^2 \,ds
\end{align*}
for all $t \in [0,T]$. Therefore, by \eqref{h1-ineq-1}, the fact that $\|\pl_2 u(0)\|_2 \leqslant \|\nabla u(0)\|_2 \leqslant \|\nabla u_0\|_2{\leqslant r}$, with $r$ from the Theorem statement, we obtain
\begin{equation}\label{h1-ineq-3}
\|\pl_2 u(t)\|_2^2 + \mu \int_0^t \|\pl_2\nabla u(s)\|_2^2 \,ds \leqslant (1+2c_3)r^2 + 2c_3C_1 = :C_2 \qquad (t \in [0,T]);
\end{equation}
in particular,
\begin{equation}\label{h2-ineq-1}
\int_0^T \|\pl_2\nabla u(s)\|_2^2 \,ds \leqslant \mu^{-1}C_2. 
\end{equation}

\medskip
(iv) \textbf{$\|\pl_1u(t)\|_2$-estimate}: Note that $\pl_1^2u(t) \in V$ for a.e. $t \in (0,T)$. Choosing $v = -\pl_1^2u(t)$ in~\eqref{Gal-approx-eq},  
we find, {as explained below,} some $c_4 \geqslant 0$ only depending on $\alpha$, $\beta$, $\mu$ and $M_B'$ such that
\begin{align}
&\frac{1}{2}\frac{d}{dt} \|\pl_1u(t)\|_2^2 + \mu \|\pl_1\nabla u(t)\|_2^2 \nonumber \\
&\leqslant |\alpha| \|u(t)\|_2\|\pl_1^2u(t)\|_2 + \pi|\beta| \|\pl_1 u(t)\|_2 \|\pl_1^2u(t)\|_2 + \|K(t)\|_2\|\pl_1^2u(t)\|_2 \nonumber \\ 
&\mkern20mu + M_B'(\| u(t)\|_2^{1/2}\|\nabla u(t)\|_2^{1/2} \|\pl_1u(t)\|_2^{1/2}\|\pl_1\nabla u(t)\|_2^{1/2}  + \|\pl_1 u(t)\|_2\|\pl_2 u(t)\|_2^{1/2}(\|\pl_2u(t)\|_2^{1/2} \nonumber \\  &\mkern20mu + \|\pl_1 \nabla u(t)\|_2^{1/2}))\|\pl_1^2u(t)\|_2 \nonumber \\
&\leqslant c_4(\|u(t)\|_2^2 + \|\pl_1 u(t)\|_2^2 + \|K(t)\|_2^2 + \|u(t)\|_2^2\|\nabla u(t)\|_2^2\|\pl_1u(t)\|_2^2 \nonumber \\ 
&\mkern20mu + \|\pl_2u(t)\|_2^2\|\pl_1u(t)\|_2^2 + \|\pl_2u(t)\|_2^2\|\pl_1u(t)\|_2^4) + \frac{\mu}{2}\|\pl_1\nabla u(t)\|_2^2 \label{attrac-eq-3} 
\end{align}
for a.e. $t \in (0,T)$. 
{Here we used} the Cauchy-Schwarz inequality, \eqref{v-l2-ineq} and Lemma~\ref{nonlin-eq-ineq}(b) for the first inequality, and Young's inequality as well as the obvious fact that $\|\pl_1^2v\|_2 \leqslant \|\pl_1\nabla v\|_2 \ (v \in H^1(\Omega))$ for the second inequality.

Subtracting $\frac{\mu}{2}\|\pl_1\nabla u(t)\|_2^2$ {from} \eqref{attrac-eq-3}, multiplying by $2$ and integrating the resulting inequality from $0$ to $t$ gives
\begin{align}
&\|\pl_1u(t)\|_2^2 - \|\pl_1u(0)\|_2^2 + \mu\int_0^t \|\pl_1\nabla u(s)\|_2^2\,ds \nonumber \\  
&\leqslant 2c_4\int_0^T \|u(s)\|_2^2 \,ds +  2c_4\int_0^T \|K(s)\|_2^2 \,ds + 2c_4\int_0^t (1 + \|u(s)\|_2^2\|\nabla u(s)\|_2^2 \nonumber \\ 
&\mkern20mu + \|\pl_2u(s)\|_2^2 + \|\pl_2u(s)\|_2^2\|\pl_1u(s)\|_2^2)\|\pl_1u(s)\|_2^2 \,ds \qquad (t \in [0,T]).  \label{h1-ineq-4}
\end{align}
Hence, by Gronwall's inequality, 
\eqref{u-gronwall}, \eqref{h1-ineq-1}, \eqref{h1-ineq-3} and assumption, we obtain
\begin{align}
\|\pl_1u(t)\|_2^2 &\leqslant \biggl(\|\pl_1u(0)\|_2^2 + 2c_4\int_0^T \|u(s)\|_2^2 \,ds +  2c_4\int_0^T \|K(s)\|_2^2 \,ds \biggr) \nonumber \\
&\mkern20mu \exp\biggl( 2c_4\int_0^t 1 + \|u(s)\|_2^2\|\nabla u(s)\|_2^2 + \|\pl_2u(s)\|_2^2  + \|\pl_2u(s)\|_2^2\|\pl_1u(s)\|_2^2 \,ds \biggr) \nonumber \\ 
&\leqslant ((1+2c_4)r^2 + 2c_4C_1T)\exp(2c_4(T + \mu^{-1}C_1^2 + C_2T + C_1C_2))=:C_3 \label{h1-ineq-infty}
\end{align}
for all $t \in [0,T]$. Using this back in 
\eqref{h1-ineq-4} yields
\begin{equation}\label{h2-ineq-2}
\int_0^T \|\pl_1\nabla u(s)\|_2^2 \,ds \leqslant \mu^{-1}((1 + 2c_4)r^2 + 2c_4(T + \mu^{-1}C_1^2 + C_2T + C_1C_2)C_3)=:C_4.   
\end{equation}
We further infer from \eqref{h1-ineq-infty} and \eqref{h1-ineq-3} that
\begin{equation}\label{h1-ineq-infty-2}
\sup_{t \in [0,T]}\|\nabla u(t)\|_2^2 \leqslant C_3 + C_2:= C_5.     
\end{equation}
Due to \eqref{eq-Delta}, we further deduce from~\eqref{h2-ineq-1} and~\eqref{h2-ineq-2} that
\begin{align}\label{h2-ineq-3} 
\int_0^T \|\Delta u(s)\|_2^2\,ds \leqslant C_4 + \mu C_2=:C_6.
\end{align}

(v) \textbf{$\|u'(t)\|_2$-estimate}: 
Choosing $v = u'(t)$ in~\eqref{Gal-approx-eq}, it follows from integration by parts that 
\begin{align*}
\|u'(t)\|_2^2 &= \mu\langle \Delta u(t),u'(t) \rangle_2- \langle u(t)\pl_1 u(t) - v_{u(t)}\pl_2 u(t),u'(t) \rangle_2 + \alpha \langle u(t),u'(t) \rangle_2 \\ 
&\mkern20mu + \beta \langle v_{u(t)}, u'(t)\rangle_2 + \langle K(t),u'(t)\rangle_2 
\end{align*}
for a.e. $t \in (0,T)$. Now, using the Cauchy-Schwarz inequality, Lemma~\ref{nonlin-eq-ineq}(b) and \eqref{v-l2-ineq}, we have
\begin{align*}
\|u'(t)\|_2^2 &\leqslant (\mu\|\Delta u(t)\|_2 + M_B''\|\nabla u(t)\|_2\|\Delta u(t)\|_2 + |\alpha|\|u(t)\|_2 + \pi|\beta|\|\nabla u(t)\|_2 \\ 
&\mkern20mu + \|K(t)\|_2) \|u'(t)\|_2
\end{align*}
for a.e. $t \in (0,T)$. Hence, by the inequality $(a + b)^2 \leqslant 4(a^2 + b^2) \ (a,b \in \R)$, we find some $c_5 \geqslant 0$ only depending on $\alpha$, $\beta$, $\mu$ and $M_B''$ such that
\begin{align}
\|u'(t)\|_2^2 &\leqslant c_5(\|\Delta u(t)\|_2^2 +\|\nabla u(t)\|_2^2\|\Delta u(t)\|_2^2 +\|u(t)\|_2^2 + \|\nabla u(t)\|_2^2 +\|K(t)\|_2^2) \label{attrac-eq-4}
\end{align}
for a.e. $t \in (0,T)$. Therefore, integrating this inequality from $0$ to $T$ and using~\eqref{h2-ineq-3}, \eqref{h1-ineq-infty-2}, \eqref{u-gronwall}, \eqref{h1-ineq-1} and the assumption, we finally obtain
\begin{align*}
\int_0^T \|u'(s)\|_2^2\,ds &\leqslant c_5\int_0^T \|\Delta u(s)\|_2^2 + \|\Delta u(s)\|_2^2\sup_{t \in [0,T]}\|\nabla u(t)\|_2^2  \\ &\mkern20mu + \sup_{t \in [0,T]}\|u(t)\|_2^2 + \|\nabla u(s)\|_2^2 + \|K(s)\|_2^2 \,ds \\
&\leqslant c_5(C_6 + C_5C_6 + C_1T + \mu^{-1}C_1 +r^2).
\end{align*}
Thus, we have shown all necessary a-priori estimates to conclude the existence of a strong solution. {Uniqueness follows as a byproduct} from \eqref{contin-depend}, i.e., the continuous dependence on the initial data, which we prove next. 

\medskip
(vi) \textbf{Continuous dependence on $u_0$}: Let $u_{0,1},u_{0,2} \in \hhpu$, and let $u_1,u_2 \in L_2(0,T;\Hhpu) \cap H^1(0,T;\ldiv)$ be two strong solutions of \eqref{evol-1} on $(0,T)$ with respect to $u_{0,1}$ and $K$ as well as $u_{0,2}$ and $K$, respectively. Let $u:=u_1-u_2$. Then, subtracting  \eqref{evol-1} for $u_2$ {from} \eqref{evol-1} for $u_1$, multiplying the resulting equation by $-\Delta u(t)$ and integrating it over $\Omega$, we obtain{, after rearranging terms,}
\begin{align}
&\frac{1}{2} \frac{d}{dt}\|\nabla u(t)\|_2^2 + \mu \|\Delta u(t)\|_2^2 - \langle B(u_1(t),u_1(t)),\Delta u(t) \rangle_2 + \langle B(u_2(t),u_2(t)), \Delta u(t) \rangle_2 \nonumber \\
&= -\alpha \langle u(t),\Delta u(t) \rangle_2 + \beta \langle v_{u(t)}, \Delta u(t) \rangle_2 \qquad (\text{a.e. } t \in (0,T)). \label{uniq-eq-1}
\end{align}
Now observe that 
\begin{align}
&\langle B(u_1(t),u_1(t)),v \rangle_2 - \langle B(u_2(t),u_2(t)),v \rangle_2 \nonumber \\ 
&= \langle B(u(t),u_1(t)),v \rangle_2 + \langle B(u_2(t),u(t)),v \rangle_2 \qquad (v \in L_2(\Omega)) \label{uniq-eq-2}
\end{align}
for a.e. $t \in (0,T)$. Based on this, we estimate \eqref{uniq-eq-1} as explained below, with some $c_6 \geqslant 0$ only depending on $\alpha$, $\beta$, $\mu$ and $M_B''$, for a.e. $t \in (0,T)$: 
\begin{align}
&\frac{1}{2} \frac{d}{dt}\|\nabla u(t)\|_2^2 + \mu\|\Delta u(t)\|_2^2 
 \nonumber \\ &\leqslant |\alpha|\|u(t)\|_2\|\Delta u(t)\|_2 + \pi |\beta| \|\pl_1 u(t)\|_2 \|\Delta u(t)\|_2 \nonumber \\ &\mkern20mu + |\langle B(u(t),u_1(t)),\Delta u(t) \rangle_2| + |\langle B(u_2(t),u(t)),\Delta u(t) \rangle_2| \nonumber \\
&\leqslant |\alpha|\|\nabla u(t)\|_2\|\Delta u(t)\|_2 + \pi |\beta| \|\nabla u(t)\|_2 \|\Delta u(t)\|_2 \nonumber \\ &\mkern20mu + M_B''\|\nabla u(t)\|_2\|\Delta u_1(t)\|_2 \|\Delta u(t)\|_2 \nonumber \\ &\mkern20mu + M_B''\|\nabla u_2(t)\|_2\|\nabla u(t)\|_2^{1/2}\|\Delta u(t)\|_2^{3/4} \nonumber \\ &\leqslant c_6(1 + \|\Delta u_1(t)\|_2^2 + \|\nabla u_2(t)\|_2^4)\|\nabla u(t)\|_2^2 + \frac{\mu}{2} \|\Delta u(t)\|_2^2,\label{uniq-eq}
\end{align}
for a.e. $t \in (0,T)$. 
{Here we used} the Cauchy-Schwarz inequality, \eqref{v-l2-ineq} and \eqref{uniq-eq-2} for the first inequality, \eqref{poin-1} and Lemma~\ref{nonlin-eq-ineq}(b) for the second inequality, and Young's inequality for the third inequality. 

Subtracting $\frac{\mu}{2}\|\nabla u(t)\|_2^2$ from \eqref{uniq-eq}, multiplying by $2$ and integrating the resulting inequality from $0$ to $t$ gives  
\begin{align*}
&\|\nabla u(t)\|_2^2 - \|\nabla (u_{0,1} - u_{0,2})\|_2^2 + \mu\int_0^t\|\Delta u(s)\|_2^2 \,ds \\ &\leqslant 2c_6\int_0^t (1 + \|\Delta u_1(s)\|_2^2 + \|\nabla u_2(s)\|_2^4)\|\nabla u(s)\|_2^2 \,ds
\end{align*}
for all $t \in [0,T]$. Therefore, Gronwall's inequality implies that
\begin{equation}\label{uniq-ineq-1}
\|\nabla u(t)\|_2^2 \leqslant \|\nabla(u_{0,1} - u_{0,2})\|_2^2 \exp \biggl(2c_6(1 + \sup_{t \in [0,T]}\|\nabla u_2(t)\|_2^4)T + 2c_6\int_0^T\|\Delta u_1(s)\|_2^2 \,ds\biggr) 
\end{equation}
for all $t \in [0,T]$; in particular, if $u_{0,1} = u_{0,2}$, then $\nabla u(t) = 0$ for all $t \in [0,T]$, i.e., $u_1 = u_2$, and thus uniqueness follows. 

Let now $u_1$ and $u_2$ be the strong solutions of \eqref{evol-1} on $(0,T)$ with respect to $u_{0,1}$ and $K$ as well as $u_{0,2}$ and $K$, respectively, obtained in the steps (i)-(v). Then it follows from \eqref{uniq-ineq-1}, \eqref{h1-ineq-infty-2} and \eqref{h2-ineq-3} that
\begin{align*}
\|\nabla u(t)\|_2^2 &\leqslant C\|\nabla(u_{0,1} - u_{0,2})\|_2^2  \qquad (t \in [0,T]),
\end{align*}
with $C:= \exp(2c_6(1 + C_5^2)T + 2c_6C_6)$. Thus, we conclude that \eqref{contin-depend} holds, which in turn completes the proof of Theorem~\ref{main-thm}.
\end{proof}

Raising the regularity of the data, we can now show that the unique global strong solution obtained from Theorem~\ref{main-thm} becomes more regular. We need some equalites and inequalities obtained in the proof of this regularity result for the existence proof of a global attractor for \eqref{evol-1} later in Section 5.

\begin{thm}\label{main-thm-reg}
Let $T, r > 0$, $u_0 \in \Hhpu$ and $K \in H^1(0,T;\hhpui) \cap C([0,T];\ldiv)$ such that $\|\Delta u_0\|_2 \leqslant r$ and $\|K\|_{H^1(0,T;\hhpui)}, \|K\|_{C([0,T];\ldiv)} \leqslant r$. Let $u$ be the unique strong solution of \eqref{evol-1} on $(0,T)$ with respect to $u_0$ and $K$. Then $u \in L_{\infty}(0,T;\Hhpu) \cap H^1(0,T;\hhpu) \cap H^2(0,T;\hhpui)$.
\end{thm}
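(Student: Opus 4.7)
\emph{Strategy.} The plan is to time-differentiate the Galerkin equation \eqref{Gal-approx-eq}, derive a new energy estimate for $w_n := u_n'$, and then use the original equation as an elliptic identity to upgrade $u$ to $L_\infty(0,T;\Hhpu)$. Since $K\in H^1(0,T;\hhpui)$ and the right-hand side of the Galerkin ODE on $V_n$ is polynomial in $u_n$, this right-hand side is $H^1$ in $t$, so standard ODE theory gives $u_n\in H^2(0,T;V_n)$, making time differentiation rigorous. To initialise the bound on $w_n(0)$, one evaluates \eqref{evol-1} at $t=0$ in $V_n$: using $u_0\in\Hhpu$, $K(0)\in\ldiv$ (well-defined since $K\in C([0,T];\ldiv)$), and $\|B(u_0,u_0)\|_2\leqslant M_B''\|\nabla u_0\|_2^{3/2}\|\Delta u_0\|_2^{1/2}$ from Lemma~\ref{nonlin-eq-ineq}(b), one obtains $\|w_n(0)\|_2\leqslant c(r)$ uniformly in $n$.

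\emph{Energy estimate for $w_n$.} Differentiating \eqref{Gal-approx-eq} in $t$ yields, for a.e.\ $t\in(0,T)$ and all $v\in V_n$,
\begin{align*}
\langle w_n'(t),v\rangle_2 &+ \mu\int_\Omega\nabla w_n(t)\cdot\nabla v\dx + \langle B(w_n(t),u_n(t))+B(u_n(t),w_n(t)),v\rangle_2\\
&= \alpha\langle w_n(t),v\rangle_2+\beta\langle v_{w_n(t)},v\rangle_2+\langle K'(t),v\rangle_{\hhpui,\hhpu}.
\end{align*}
Testing with $v=w_n(t)$, the term $\langle B(u_n,w_n),w_n\rangle_2$ vanishes by Lemma~\ref{nonlin-eq-ineq}(c). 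For $|\langle B(w_n,u_n),w_n\rangle_2|$ I will use the sharpened anisotropic bound from the remark following Lemma~\ref{nonlin-eq-ineq}, which (after permutation of arguments) gives a control of the form $c\bigl(\|w_n\|_2\|\nabla w_n\|_2\|\nabla u_n\|_2 + \|\pl_1 w_n\|_2^{3/2}\|\nabla u_n\|_2\|w_n\|_2^{1/2}\bigr)$. Young's inequality splits this into $\tfrac{\mu}{2}\|\nabla w_n\|_2^2$ plus a Grönwall-admissible term $c(1+\|\nabla u_n\|_2^4)\|w_n\|_2^2$. The linear terms are handled by Cauchy-Schwarz, \eqref{v-l2-ineq} and Peter-Paul, and $\langle K'(t),w_n\rangle$ yields $\|K'(t)\|_{\hhpui}^2\in L_1(0,T)$. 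Since Theorem~\ref{main-thm} already provides $\|\nabla u_n\|_{L_\infty(0,T;L_2)}\leqslant C_5^{1/2}$, Grönwall closes and gives bounds, uniform in $n$, for $w_n$ in $L_\infty(0,T;\ldiv)\cap L_2(0,T;\hhpu)$. Passing to the limit yields $u'\in L_\infty(0,T;\ldiv)$ and $u\in H^1(0,T;\hhpu)$.

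\emph{Elliptic upgrade and equation for $u''$.} Since the strong solution satisfies \eqref{evol-1} in $\ldiv$ pointwise a.e., I rewrite $\mu\Delta u(t) = u'(t) + \mathbb{P}B(u(t),u(t)) - \alpha u(t) - \beta\mathbb{P}v_{u(t)} - K(t)$ and take $\|\cdot\|_2$. Lemma~\ref{nonlin-eq-ineq}(b) gives $\|B(u,u)\|_2\leqslant M_B''\|\nabla u\|_2^{3/2}\|\Delta u\|_2^{1/2}$, and Young's inequality absorbs $\tfrac{\mu}{2}\|\Delta u(t)\|_2$ on the left, so $\|\Delta u(t)\|_2$ is bounded pointwise by the already $L_\infty(0,T)$-quantities $\|u'(t)\|_2$, $\|\nabla u(t)\|_2$, $\|K(t)\|_2$ (the last by $K\in C([0,T];\ldiv)$). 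Hence $u\in L_\infty(0,T;\Hhpu)$. Finally, to obtain $u''\in L_2(0,T;\hhpui)$, I read the differentiated equation as $u''(t) = \mu\Delta u'(t) - \mathbb{P}(B(u',u)+B(u,u'))(t) + \alpha u'(t) + \beta\mathbb{P}v_{u'(t)} + K'(t)$ and apply Lemma~\ref{nonlin-eq-ineq}(a), which bounds $\|B(u',u)+B(u,u')\|_{\hhpui}$ by $2M_B\|\nabla u\|_2\|\nabla u'\|_2$; the product of the $L_\infty$-factor $\|\nabla u\|_2$ and the $L_2$-factor $\|\nabla u'\|_2$ lies in $L_2(0,T)$, and the other terms are in $L_2(0,T;\hhpui)$ thanks to the assumption on $K$ and the $H^1(0,T;\hhpu)$-regularity of $u$.

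\emph{Main obstacle.} The delicate step is the energy estimate for $w_n$: of the two nonlinear terms produced by differentiation, only $\langle B(u_n,w_n),w_n\rangle_2$ is annihilated by the cancellation of Lemma~\ref{nonlin-eq-ineq}(c), so the remaining $\langle B(w_n,u_n),w_n\rangle_2$ must be controlled without introducing a factor of $\|\Delta u_n\|_2$, which is only in $L_2(0,T)$ and would preclude Grönwall's lemma. The anisotropic Sobolev estimate of Lemma~\ref{mixed-space-ineq}, through the remark-sharpening of Lemma~\ref{nonlin-eq-ineq}(a), is precisely what allows us to keep $u_n$ at the $H^1$-level and pay only with $\|\nabla u_n\|_2^4$, a quantity that Theorem~\ref{main-thm} has already bounded uniformly on $[0,T]$.
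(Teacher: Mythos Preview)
Your proposal is correct and follows the same three-step architecture as the paper's proof: an energy estimate for $w_n=u_n'$ obtained by differentiating the Galerkin system and testing with $w_n$, an elliptic upgrade reading \eqref{evol-1} as an equation for $\mu\Delta u$, and a dual estimate for $u''$ via Lemma~\ref{nonlin-eq-ineq}(a). The initialisation of $\|w_n(0)\|_2$ and the last two steps are essentially identical to what the paper does.

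The one genuine difference is in how you handle $\langle B(w_n,u_n),w_n\rangle_2$. You invoke the sharpened anisotropic bound from the Remark after Lemma~\ref{nonlin-eq-ineq} in order to keep $u_n$ at the $H^1$-level, ending up with a Gr\"onwall coefficient $c(1+\|\nabla u_n\|_2^4)\in L_\infty(0,T)$. The paper instead applies Lemma~\ref{nonlin-eq-ineq}(b) directly, obtaining $|\langle B(u',u),u'\rangle_2|\leqslant M_B''\|\nabla u'\|_2\|\Delta u\|_2\|u'\|_2$ and hence a Gr\"onwall coefficient $c(1+\|\Delta u_n\|_2^2)$. Your ``Main obstacle'' paragraph asserts that a factor $\|\Delta u_n\|_2$ ``is only in $L_2(0,T)$ and would preclude Gr\"onwall's lemma''; this is a misconception. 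Gr\"onwall only requires the coefficient to be integrable, and $\int_0^T\|\Delta u_n(s)\|_2^2\,ds\leqslant C_6$ is exactly the bound \eqref{h2-ineq-3} supplied by the proof of Theorem~\ref{main-thm}. So the paper's route is actually the more direct one; your detour through the Remark works too, but the obstacle you identify is not real.
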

\begin{proof}
Let $u_n$, $V_n$, $P_n$, $c_1,\dots, c_6$ and $C_1, \dots,C_6$ be as in the proof of Theorem~\ref{main-thm}. To prove the assertion, we only establish necessary a-priori estimates for $u_n$ similar to the proof of Theorem~\ref{main-thm}. The assertion then follows by use of Aubin-Lions compactness theorem. For simplicity we omit the index $n$. To begin with, we observe that $u \in H^2(0,T;V)$ and 
\begin{align}
&\langle u''(t),v \rangle_2 + \mu \int_{\Omega} \nabla u(t) \cdot \nabla v \dx  + \langle B(u(t),u'(t)).v \rangle_2 + \langle B(u'(t),u(t)),v \rangle_2 \nonumber \\ 
&= \alpha \langle u'(t),v \rangle_2 + \beta\langle v_{u'(t)},v \rangle_2 + \langle K'(t),v \rangle_{\hhpui,\hhpu} \qquad (v \in V) \label{u-prime-ineq-1}
\end{align}
for a.e. $t \in (0,T)$, by \eqref{Gal-approx-eq} and the fact that $K \in H^1(0,T;\hhpui)$.  In a first step we will now show a-priori estimates for $\|u'(0)\|_2$ and $\sup_{t \in [0,T]}\|u'(t)\|_2$.

(i) \textbf{$\|u'(t)\|_2^2$-estimate}: Putting $t = 0$ in \eqref{attrac-eq-4} and using the assumption, we have that 
\begin{align}
\|u'(0)\|_2^2 &\leqslant c_5(\|\Delta u(0)\|_2^2 + \|\Delta u(0)\|_2^2 + \|u(0)\|_2^2 + \|\nabla u_0\|_2^2 + \|K(0)\|_2^2) \nonumber \\
&\leqslant c_5(r^4 + 4r^2)=: C_7. \label{u_0-prime-ineq-2} 
\end{align}
Further, by putting $v = u'(t)$ in \eqref{u-prime-ineq-1}, using \eqref{v-l2-ineq}, the Cauchy-Schwarz inequality as well as Lemma~\ref{nonlin-eq-ineq}(b) and (c), we obtain 
\begin{align*}
\frac{1}{2} \frac{d}{dt} \|u'(t)\|_2^2 + \mu \|\nabla u'(t)\|_2^2 &\leqslant |\alpha| \|u'(t)\|_2^2 +  \pi|\beta| \|\pl_1 u(t)\|_2^2 \|u'(t)\|_2 \\
&\mkern20mu + \|K'(t)\|_{\hhpui}\|\nabla u'(t)\|_2 + M_B''\|\nabla u'(t)\|_2 \|\Delta u(t)\|_2 \|u'(t)\|_2 
\end{align*}
for a.e. $t \in (0,T)$. By the Peter-Peter-Paul inequality, we find some $c_7 \geqslant 0$ only depending on $\mu$, $\alpha$, $\beta$ and $M_B''$ such that
\begin{equation}\label{u-prime-ineq-2}
\frac{1}{2} \frac{d}{dt} \|u'(t)\|_2^2 + \mu \|\nabla u'(t)\|_2^2 \leqslant c_7(1 + \|\Delta u(t)\|_2^2)\|u'(t)\|_2^2 + c_7\|K'(t)\|_{\hhpui}^2 + \frac{\mu}{2} \|\nabla u'(t)\|_2^2.
\end{equation}
for a.e. $t \in (0,T)$. Subtracting \eqref{u-prime-ineq-2} by $\frac{\mu}{2}\|\nabla u'(t)\|_2^2$, multiplying by 2 and integrating the resulting equation from $0$ to $t$ gives
\begin{align}
&\|u'(t)\|_2^2 - \|u'(0)\|_2^2 + \mu \int_0^t \|\nabla u'(s)\|_2^2 \,ds \nonumber \\ 
&\leqslant 2c_7 \int_0^t(1 + \|\Delta u(s)\|_2^2) \|u'(s)\|_2^2 \,ds
+ 2c_7 \int_0^t \|K'(s)\|_{\hhpui}^2 \,ds \label{u-prime-ineq-3}
\end{align}
for all $t \in [0,T]$. Now, Gronwall's inequality, \eqref{u_0-prime-ineq-2}, \eqref{h2-ineq-3} and the assumption imply that 
\begin{equation}\label{u-prime-ineq}
\|u'(t)\|_2^2 \leqslant (C_7 + 2c_7r^2)e^{2c_7(T + C_6)}=:C_8 \qquad (t \in [0,T]), 
\end{equation}
which together with \eqref{u-prime-ineq-3} yields 
\begin{equation}\label{u-prime-ineq-4}
\int_0^T \|\nabla u'(s)\|_2^2 \,ds \leqslant \mu^{-1}C_8.   
\end{equation}

(ii) \textbf{$\|\Delta u(t)\|_2^2$-estimate}: Putting $v = -\Delta u(t)$ in \eqref{Gal-approx-eq}, and using \eqref{v-l2-ineq} as well as Lemma~\ref{nonlin-eq-ineq}(b), we obtain
\begin{align*}
\mu \|\Delta u(t)\|_2^2 &\leqslant (\|u'(t)\|_2 + |\alpha|\|u(t)\|_2 + \pi |\beta| \|\nabla u(t)\|_2 + \|K(t)\|_2 \\ &\mkern20mu + M_B''\|\nabla u(t)\|_2^{3/2}\|\Delta u(t)\|_2^{1/2})\|\Delta u(t)\|_2 \qquad (t \in [0,T]).
\end{align*}
Thus, by the Peter-Paul inequality and the inequality $(a + b)^2 \leqslant 4(a^2 + b^2) \ (a,b \in \R)$, we find a $c_{8} \geqslant 0$ only depending on $\mu$, $\alpha$, $\beta$ and $M_B''$ such that
\begin{equation}\label{u-prime-ineq-5}
\|\Delta u(t)\|_2^2 \leqslant c_8(\|u'(t)\|_2^2 + \|u(t)\|_2^2 + \|\nabla u(t)\|_2^2 + \|\nabla u(t)\|_2^6 + \|K(t)\|_2^2) \qquad (t \in [0,T]).
\end{equation}
By \eqref{u-prime-ineq}, \eqref{u-gronwall} and \eqref{h1-ineq-infty-2} and assumption it follows that 
\[
\sup_{t \in [0,T]} \|\Delta u(t)\|_2^2 \leqslant c_8 (C_8 + C_1 + C_5 + (C_5)^3 + r^2).
\]
Consequently, it suffices to show a $\|u''(t)\|_{L_2(0,T;\hhpui)}$-estimate.

(iii) \textbf{$\|u''(t)\|_{L_2(0,T;\hhpui)}$-estimate}: 
As explained below, we claim that 
\begin{align*}
|\langle u''(t),v \rangle_2| = |\langle u''(t),Pv \rangle_2| &\leqslant \mu \|\nabla u'(t)\|_2 \|\nabla Pv\|_2 + |\langle B(u'(t),u(t)),Pv\rangle_2| \\ &\mkern20mu + |\langle B(u(t),u'(t)),Pv \rangle_2| + |\alpha|\|u'(t)\|_2\|Pv\| + \\ &\mkern20mu +\pi|\beta|\|\pl_1 u'(t)\|_2\|Pv\|_2 + \|K'(t)\|_{\hhpui} \|\nabla Pv\|_2 \\
&\leqslant ((\mu + |\alpha| + |\beta|)\|\nabla u'(t)\|_2 + 2M_B\|\nabla u'(t)\|_2\|\nabla u(t)\|_2 \\ &\mkern20mu 
+\|K'(t)\|_{\hhpui})\|\nabla v\|_2 \qquad (v \in \hhpu)
\end{align*}
for a.e. $t \in (0,T)$. 
Here, for the first equality we used that  $u''(t) \in V$, for the first inequality we used \eqref{u-prime-ineq-1} and \eqref{v-l2-ineq}, and for the second inequality we used \eqref{poin-1}, Lemma~\ref{nonlin-eq-ineq}(a) as well as the fact that $\|\nabla Pv\|_2 \leqslant \|v\|_{1,\textnormal{div}} \ (v \in \hhpu)$. 
From this we further infer that 
\[
\|u''(t)\|_{\hhpui}^2 \leqslant c_9 (\|\nabla u'(t)\|_2^2 + \|\nabla u'(t)\|_2^2\|\nabla u(t)\|_2^2 + \|K'(t)\|_{\hhpui}^2) 
\]
for a.e. $t \in (0,T)$, with some $c_9 \geqslant 0$ only depending on $\mu$, $\alpha$, $\beta$ and $M_B$. Now, integrating this inequality from $0$ to $T$, using \eqref{u-prime-ineq-4}, \eqref{h1-ineq-infty-2} and the assumption, we obtain
\[
\|u''\|_{L_2(0,T;\hhpui)}^2 \leqslant c_9(\mu^{-1}C_8 + \mu^{-1}C_5C_8 + r^2).
\]
This completes the proof of Theorem~\ref{main-thm-reg}.
\end{proof}

\section{Existence of compact global attractor}\label{sec-attractor}

In this section we show our second main result Theorem~\ref{main-thm-attrac}, i.e., the dynamics associated with \eqref{evol-1} feature a (compact) global attractor. 
For this, let $K \in L_{2,\textnormal{loc}}(0,\infty;\ldiv)$ be arbitrary but fixed throughout and $S\colon[0,\infty) \to M(\hhpu)$ from \eqref{e:semigr}.

Following \cite[Def.~I.1.2]{Tem97}, we say that a (compact) set $A \sq \hhpu$ is a (\textbf{compact}) \textbf{global attractor} of $S$ if
\begin{enumerate}
    \item[(i)] $A$ is invariant under $S$, i.e., $S(t)A = A$ for all $t \geqslant 0$, and
    \item[(ii)] $A$ attracts the bounded sets of $\hhpu$, i.e., for all bounded $B \sq \hhpu$ holds
    \[
    d(S(t)B,A) \to 0 \qquad (t \to \infty),
    \]
    where $d(B_1,B_2)$ is defined as the semidistance of two sets $B_1,B_2 \sq \hhpu$, i.e., 
    \[
    d(B_1,B_2):= \sup_{u \in B_1}  \inf_{v \in B_2} \|u-v\|_{H^1(\Omega)}.
    \]
\end{enumerate}

We use the following standard result for showing the existence of an attractor; for the formulation of this result we simplify  \cite[Thm.~I.1.1]{Tem97}, adapted to our situation. 
\begin{thm}\label{thm-attractor}
Let $H$ be a metric space, and $U\colon [0,\infty) \to M(H)$ a continuous semigroup. Let $B \sq H$ an \textbf{absorbing set} of $U$, i.e, for any $B_0 \sq H$ there exists a $t_0 \geqslant 0$ such that $U(t)B_0 \sq B$. Assume that there exists $t_1 > 0$ such that $U(t_1)B$ is compact. Then the \textbf{$\omega$-limit set} 
\[
A:= \bigcap_{s \geqslant 0} \overline{\bigcup_{t \geqslant s} U(t)B}
\]
of $B$ is a compact global attractor of $U$. Moreover, $A$ is the maximal bounded attractor of $U$ (for the inclusion relation).
\end{thm}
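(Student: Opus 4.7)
The plan is to follow the classical Temam-type argument in four stages: first establish structural properties of $A$ (nonemptyness, compactness), then invariance $U(t)A = A$, then attraction, and finally maximality. A useful preparatory observation is the sequential characterization: $u \in A$ if and only if there exist $u_n \in B$ and $t_n \to \infty$ with $U(t_n) u_n \to u$ in $H$, which follows from the definition of the $\omega$-limit set by a standard diagonal extraction.

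For compactness and nonemptyness of $A$: applying the absorbing property with $B_0 := B$ yields $t_0 \geqslant 0$ such that $U(t) B \subseteq B$ for all $t \geqslant t_0$. Hence for such $t$, $U(t + t_1) B = U(t_1) U(t) B \subseteq U(t_1) B$, which is compact by hypothesis. Consequently $\bigcup_{t \geqslant t_0 + t_1} U(t) B \subseteq U(t_1) B$, so its closure is compact, and the nested family $\{\overline{\bigcup_{t \geqslant s} U(t) B}\}_{s \geqslant t_0 + t_1}$ consists of nonempty compact subsets of the fixed compact $\overline{U(t_1)B}$. Therefore $A$, as their intersection, is nonempty and compact.

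For invariance $U(t) A = A$: the inclusion $U(t) A \subseteq A$ follows by applying continuity of $U(t)$ to approximating sequences furnished by the characterization. The reverse inclusion $A \subseteq U(t) A$ is the subtlest step, and I expect it to be the main obstacle: given $u \in A$ realized as $u = \lim_n U(t_n) u_n$ with $u_n \in B$ and $t_n \to \infty$, consider the shifted sequence $U(t_n - t) u_n$ for $n$ so large that $t_n - t \geqslant t_0 + t_1$. This sequence lies in the compact set $\overline{U(t_1) B}$, so it admits a subsequence converging to some $v$, and $v \in A$ by the sequential characterization. Continuity of $U(t)$ then gives $U(t) v = \lim U(t_{n_k}) u_{n_k} = u$, as required.

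For attraction and maximality: given a bounded $B_0 \subseteq H$, the absorbing property gives $U(t) B_0 \subseteq B$ for $t$ sufficiently large, so it suffices to prove $d(U(t) B, A) \to 0$. Arguing by contradiction, a failure produces $\varepsilon > 0$, $u_n \in B$ and $t_n \to \infty$ with $d(U(t_n) u_n, A) \geqslant \varepsilon$; but eventual compactness extracts a subsequence $U(t_{n_k}) u_{n_k} \to w$, and the characterization forces $w \in A$, a contradiction. Finally, for any bounded attractor $A'$ one has $U(t)A' = A'$ and hence $d(A', A) = d(U(t) A', A) \to 0$ by attraction of the bounded set $A'$; since $A$ is closed this forces $A' \subseteq A$, yielding maximality.
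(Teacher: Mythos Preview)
The paper does not prove this theorem; it quotes it as a standard result from Temam's book \cite[Thm.~I.1.1]{Tem97} and uses it as a black box. Your argument is correct and is essentially the classical proof found there, so there is nothing to compare.
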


In order to apply Theorem~\ref{thm-attractor} in our case, we next 
show that $S$ has an absorbing set.  

\begin{prop}\label{absorb-ball}
Let $K \in H^1(0,\infty;\hhpui) \cap C_b([0,\infty);\ldiv)$, and assume that $\pi|\beta| < \mu$ if $\alpha \leqslant 0$ and $\alpha + \pi |\beta| < \mu$ if $\alpha > 0$. Then there exists an $R > 0$ such that $B_{\Hhpu}(0,R)$ is an absorbing set for $S$ in $\hhpu$, i.e., for any $r > 0$ there exists a $t_0 > 0$ such that $S(t)B_{\hhpu}(0,r) \sq B_{\Hhpu}(0,R)$ for all $t \geqslant t_0$.
\end{prop}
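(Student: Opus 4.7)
The plan is to construct the absorbing ball by a three-stage bootstrap, first in $L_2(\Omega)$, then in $\hhpu$, and finally in $\Hhpu$, where at each stage the uniform Gronwall lemma (Lemma~\ref{uniform-gronwall}) converts a uniform integral bound into a uniform pointwise bound. The energy identities derived in the course of proving Theorem~\ref{main-thm} (namely \eqref{e:attr-est}, \eqref{attrac-eq-2}, \eqref{attrac-eq-3}) and Theorem~\ref{main-thm-reg} (namely \eqref{u-prime-ineq-2}, \eqref{u-prime-ineq-5}) supply the scalar differential inequalities to which Gronwall-type arguments are applied, while the hypothesis on $\alpha,\beta,\mu$ is used precisely to make the first stage dissipative.

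For Stage~1, starting from \eqref{e:attr-est} and using Poincar\'e $\|u\|_2\leqslant\|\nabla u\|_2$ together with $\|\pl_1 u\|_2\leqslant\|\nabla u\|_2$, the parameter assumption yields
\[
\alpha\|u(t)\|_2^2 + \pi|\beta|\|\pl_1 u(t)\|_2\|u(t)\|_2 \leqslant \theta\|\nabla u(t)\|_2^2,
\]
with $\theta:=\pi|\beta|$ if $\alpha\leqslant 0$ and $\theta:=\alpha+\pi|\beta|$ if $\alpha>0$, so that $\theta<\mu$ in both cases. Absorbing the forcing term by Young's inequality and using Poincar\'e once more, one obtains $\frac{d}{dt}\|u\|_2^2 + \eta\|u\|_2^2 \leqslant c\|K(t)\|_2^2$ for some $\eta>0$. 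Since $K\in C_b([0,\infty);\ldiv)$, ordinary Gronwall shows that $\|S(t)u_0\|_2$ enters a fixed ball after a time $t_0$ depending only on $\|u_0\|_2$; retaining the term $(\mu-\theta)\|\nabla u\|_2^2$ on the left-hand side and integrating over $[t,t+1]$ produces a uniform-in-$t\geqslant t_0$ bound on $\int_t^{t+1}\|\nabla u(s)\|_2^2\,ds$.

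For Stage~2, I would apply the uniform Gronwall lemma first to the $\|\pl_2 u\|_2^2$-equation \eqref{attrac-eq-2} and then to the $\|\pl_1 u\|_2^2$-equation \eqref{attrac-eq-3}. In both cases, after Young's inequality and Lemma~\ref{nonlin-eq-ineq}(b), the resulting differential inequality has the form $y'(t)\leqslant g(t)y(t)+h(t)$ with $g,h$ built from quantities ($\|u\|_2^2$, $\|\nabla u\|_2^2$, $\|K\|_2^2$, $\|\pl_2 u\|_2^2$) whose integrals over $[t,t+1]$ are uniformly bounded by Stage~1 and, for the $\pl_1$-step, by the preceding $\pl_2$-step. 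This yields uniform pointwise bounds on $\|\pl_2 u(t)\|_2$ and $\|\pl_1 u(t)\|_2$, hence an absorbing ball in $\hhpu$; integrating the same inequalities over $[t,t+1]$ gives, together with \eqref{eq-Delta}, a uniform bound on $\int_t^{t+1}\|\Delta u(s)\|_2^2\,ds$. For Stage~3, the differential inequality \eqref{u-prime-ineq-2} for $\|u'(t)\|_2^2$ is again of uniform-Gronwall type once $\|\Delta u\|_2^2$ is integrably bounded, giving a uniform pointwise bound on $\|u'(t)\|_2$; plugging this into the algebraic estimate \eqref{u-prime-ineq-5} yields the uniform $\|\Delta u(t)\|_2$-bound that defines the absorbing radius $R$.

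The main obstacle is the careful exploitation of the strict-dissipation hypothesis in Stage~1: without it the runaway solutions \eqref{explicit-sol} prevent any $L_2$-absorbing ball, and all subsequent Gronwall iterations break down. A minor technical point is that \eqref{u-prime-ineq-2} is obtained on Galerkin approximations with data in $\Hhpu$; one needs to observe that, thanks to Theorem~\ref{main-thm-reg} and the fact that $u(\tau)\in\Hhpu$ for a.e.\ $\tau>0$ (since $u\in L_{2,\textnormal{loc}}(0,\infty;\Hhpu)$), the strong solution is regular enough from any such time onward for the pointwise-in-time estimates to apply on $[t_0,\infty)$.
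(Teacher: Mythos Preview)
Your proposal is correct and follows essentially the same route as the paper: the paper also works on the Galerkin approximations, first uses the dissipation hypothesis on \eqref{attrac-eq-1} to get an $L_2$-absorbing ball and a uniform bound on $\int_t^{t+1}\|\nabla u\|_2^2\,ds$, then applies the uniform Gronwall lemma successively to \eqref{attrac-eq-2}, \eqref{attrac-eq-3}, and \eqref{u-prime-ineq-2}, and closes with \eqref{u-prime-ineq-5}. One small point to make explicit in Stage~3: to apply Lemma~\ref{uniform-gronwall} to \eqref{u-prime-ineq-2} you need not only the integrability of $\|\Delta u\|_2^2$ (the coefficient $a$) but also a uniform bound on $\int_t^{t+1}\|u'(s)\|_2^2\,ds$ (the hypothesis on $\int f$); the paper obtains this by integrating \eqref{attrac-eq-4} over $[t,t+1]$, which you have all the ingredients for but did not state.
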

For the proof of Proposition~\ref{absorb-ball} we use the well-known \textit{uniform Gronwall Lemma}, which we state for the sake of completeness; 
see \cite[Lem.~III.1.1]{Tem97}.
\begin{lem}[The uniform Gronwall Lemma]\label{uniform-gronwall}
Let $t_0, c_1,c_2,c_3 \geqslant 0$, $r > 0$, $0 \leqslant a,b \in L_{1,\textnormal{loc}}(t_0,\infty)$ and $0 \leqslant f \in W_{1,\textnormal{loc}}^1(t_0,\infty)$, and assume that
\[
f'(t) \leqslant a(t)f(t) + b(t) \qquad (\text{a.e. } t \in (t_0,\infty)),
\]
and 
\[
\int_t^{t + r} a(s) \,ds \leqslant c_1, \qquad \int_t^{t + r} b(s) \,ds \leqslant c_2, \qquad \int_t^{t+r} f(s)\,ds \leqslant c_3 \qquad (t \geqslant t_0).
\]
Then
\[
f(t + r) \leqslant (c_2 + r^{-1}c_3)e^{c_1} \qquad (\text{a.e. } t \in (t_0,\infty)).
\]
\end{lem}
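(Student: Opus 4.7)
The plan is to apply the classical integrating-factor trick to the differential inequality, and then to average the resulting pointwise estimate over the starting point of the integration in order to bring in the third hypothesis $\int_t^{t+r} f \leqslant c_3$.

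First, fix $t \geqslant t_0$ and for $s \in [t,t+r]$ define
\[
A(s) := \int_t^s a(\sigma)\, d\sigma, \qquad g(s) := e^{-A(s)} f(s).
\]
Since $a \in L_{1,\textnormal{loc}}(t_0,\infty)$ and $a \geqslant 0$, the function $A$ is absolutely continuous on $[t,t+r]$ with $A' = a$ a.e., $A \geqslant 0$, and $A(t+r) \leqslant c_1$. Because $e^{-A}$ is then Lipschitz on $[t,t+r]$ and $f \in W_{1,\textnormal{loc}}^1(t_0,\infty)$, the product $g$ is absolutely continuous on $[t,t+r]$, and the product/chain rule combined with the hypothesis $f' \leqslant af + b$ gives, for a.e.\ $s \in (t,t+r)$,
\[
g'(s) = e^{-A(s)}\bigl(f'(s) - a(s)f(s)\bigr) \leqslant e^{-A(s)} b(s) \leqslant b(s),
\]
where in the last step we used $e^{-A} \leqslant 1$ together with $b \geqslant 0$.

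Next, for an arbitrary $\tau \in [t,t+r]$, the fundamental theorem of calculus for absolutely continuous functions yields
\[
g(t+r) - g(\tau) \;=\; \int_\tau^{t+r} g'(s)\, ds \;\leqslant\; \int_t^{t+r} b(s)\, ds \;\leqslant\; c_2.
\]
Writing this out and using $e^{-A(\tau)} \leqslant 1$, we obtain $e^{-A(t+r)} f(t+r) \leqslant f(\tau) + c_2$, and multiplying by $e^{A(t+r)} \leqslant e^{c_1}$ gives the pointwise bound
\[
f(t+r) \;\leqslant\; \bigl(f(\tau) + c_2\bigr) e^{c_1} \qquad (\tau \in [t,t+r]).
\]

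Finally, to introduce the mean-value hypothesis on $f$, integrate this inequality in $\tau$ over $[t,t+r]$. The left-hand side becomes $r\, f(t+r)$, while the right-hand side is bounded by $e^{c_1}\bigl(\int_t^{t+r} f(\tau)\, d\tau + r c_2\bigr) \leqslant e^{c_1}(c_3 + r c_2)$. Dividing by $r$ yields the asserted estimate $f(t+r) \leqslant (c_2 + r^{-1} c_3) e^{c_1}$. There is no real obstacle in this argument; the only technical point is the product/chain rule used to compute $g'$, which is standard since $A$ is absolutely continuous on any compact subinterval of $(t_0,\infty)$ and $f$ is Sobolev-regular there.
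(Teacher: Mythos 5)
Your proof is correct and is essentially the classical integrating-factor argument of Temam (Lemma~III.1.1 in the cited reference), which is exactly what the paper relies on — the paper itself gives no proof and simply defers to that reference. The only hypotheses you implicitly invoke, namely $f\geqslant 0$ (to pass from $e^{-A(\tau)}f(\tau)$ to $f(\tau)$) and $b\geqslant 0$ (to enlarge the integration interval), are both part of the statement, so there is no gap.
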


Now we can turn to the proof of Proposition~\ref{absorb-ball}.

\begin{proof}[Proof of Proposition~\ref{absorb-ball}]
Fix $r > 0$ and $u_0 \in B_{\hhpu}(0,r)$. Let $u_n$ and $V_n$ be as in the proofs of Theorem~\ref{main-thm} with respect to $u_0$ and $K$ for $T = \infty$. (Here we may choose $T = \infty$ due to the uniqueness of $u_n$ on intervals $(0,t)$ for arbitrary $t > 0$. In this case, we have that $u \in H_{\textnormal{loc}}^2(0,\infty;V)$; see Theorem~\ref{main-thm}.) Let further $c_1,\dots, c_9$ be as in the proofs of Theorem~\ref{main-thm} and Theorem~\ref{main-thm-reg}. As in the proof of Theorem~\ref{main-thm}, we show the assertion for $u_n$, after which the general assertion follows by approximation. For simplicity, we again omit the index $n$. To prove the assertion, we follow the standard method, which uses the uniform Gronwall Lemma. In a first step, we will now prove global bounds for $\|u(t)\|_2$ and $\int_t^{t+1} \|\nabla u(s)\|_2^2 \,ds$ independent of $n$ and $r$.

(i) \textbf{$\int_t^{t+1} \|\nabla u(s)\|_2^2 \,ds$-estimate}: By using \eqref{v-l2-ineq}, \eqref{poin-1}, the fact that $\|v\|_{\hhpui} \leqslant \|v\|_2 \ (v \in L_2(\Omega))$ and the Peter-Paul inequality, it follows from \eqref{attrac-eq-1} that
\begin{equation}\label{u-attrac-1}
\frac{d}{dt}\|u(t)\|_2^2 + \omega \|\nabla u(t)\|_2^2 \leqslant c_{10} \|K(t)\|_2^2 \qquad (t > 0)
\end{equation}
for some $c_{10} \geqslant 0$ only depending on $\alpha$, $\beta$ and $\mu$ and with $\omega:= \mu - \pi|\beta|$ if $\alpha \leqslant 0$ and $\omega:= \mu - \alpha - \pi|\beta|$ if $\alpha > 0$. Multiplying \eqref{u-attrac-1} by $e^{\omega t}$ and using \eqref{poin-1} further yields
\[
\frac{d}{dt}(e^{\omega t} \|u(t)\|_2^2) \leqslant c_{10} e^{\omega t} \|K(t)\|_2^2 \qquad (t > 0).
\]
Integrating this inequality from $0$ to $t$ and subsequent multiplication by $e^{-\omega t}$ gives
\[
\|u(t)\|_2^2 \leqslant e^{-\omega t}\|u_0\|_2^2 + c_{10} \int_0^t e^{\omega(s-t)} \|K(s)\|_2^2 \,ds \leqslant r^2e^{-\omega t} + c_{10}\|K\|_{L_2(0,\infty;\ldiv)}^2
\]
for all $t \geqslant 0$. Thus, there exists a $t_0 \geqslant 0$ such that
\begin{equation}\label{u-attrac-2}
\|u(t)\|_2^2 \leqslant 1 + c_{10}\|K\|_{L_2(0,\infty;\ldiv)}^2 =:M_1 \qquad (t \geqslant t_0);
\end{equation}
note that $M_1$ is independent of $n$ and $r$. Therefore, integrating \eqref{u-attrac-1} from $t$ to $t + 1$, we get that
\begin{align}
\int_t^{t + 1} \|\nabla u(s)\|_2^2\,ds &\leqslant \omega^{-1}(\|u(t)\|_2^2 + c_{10}\|K\|_{L_2(0,\infty;\ldiv)}^2) \nonumber \\
&\leqslant \omega^{-1}(M_1 + c_{10}\|K\|_{L_2(0,\infty;\ldiv)}^2)=:M_2 \qquad (t \geqslant t_0). \label{u-attrac-3}
\end{align}

(ii) \textbf{$\int_t^{t+1} \|\nabla \pl_2 u(s)\|_2^2 \,ds$-estimate}: Treating equation \eqref{attrac-eq-2} similar as \eqref{attrac-eq-1} in step (i), we obtain the inequality
\begin{equation}\label{u-attrac-4}
\frac{d}{dt} \|\pl_2u(t)\|_2^2 + \omega \|\nabla \pl_2 u(t)\|_2^2 \leqslant c_{10} \|K(t)\|_2^2 \qquad (t > 0).
\end{equation}
Now, by \eqref{u-attrac-3} and Lemma~\ref{uniform-gronwall}, it follows that 
\begin{equation}\label{u-attrac-5}
\|\pl_2 u(t)\|_2^2 \leqslant M_2  + c_{10}\|K\|_{L_2(0,\infty;L_{2,0}(\Omega))}^2=:M_3 \qquad (t \geqslant t_0 + 1). 
\end{equation}
Thus, integrating \eqref{u-attrac-4} from $t$ to $t+1$ yields
\begin{equation}\label{u-attrac-6}
\int_t^{t+1} \|\nabla \pl_2 u(s)\|_2^2 \,ds \leqslant \omega^{-1}(M_3  + c_{10}\|K\|_{L_2(0,\infty;\ldiv)}^2)=:M_4 \qquad (t \geqslant t_0 + 1). 
\end{equation}

(iii) \textbf{$\int_t^{t+1} \|\nabla \pl_1 u(s)\|_2^2 \,ds$-estimate}: Subtracting  $\frac{\mu}{2}\|\nabla \pl_1 u(t)\|_2^2$ from the last inequality in \eqref{attrac-eq-3}, multiplying by $2$ and using \eqref{poin-1}, \eqref{u-attrac-2} as well as \eqref{u-attrac-5}, we get the inequality 
\begin{align}
\frac{d}{dt} \|\pl_1u(t)\|_2^2 + \mu\|\nabla \pl_1u(t)\|_2^2 &\leqslant 2c_3(1 + (1 + M_1 + M_3)\|\nabla u(t)\|_2^2)\|\pl_1 u(t)\|_2^2 \nonumber \\ &\mkern20mu + 2c_3M_1 + 2c_3 \|K(t)\|_2^2 \qquad (t \geqslant t_0 + 1). \label{u-attrac-7}
\end{align}
Thus, due to \eqref{u-attrac-4}, Lemma~\ref{uniform-gronwall} implies that 
\begin{equation}\label{u-attrac-8}
\|\pl_1u(t)\|_2^2 \leqslant (M_2 + 2c_3(M_1 + \|K\|_{L_2(0,\infty;\ldiv)}^2))e^{2c_3(1 + (1 + M_1 + M_3)M_2)}=: M_5  
\end{equation}
for all $t \geqslant t_0 + 2$. Moreover, integrating \eqref{u-attrac-7} from $t$ to $t + 1$, we have that 
\begin{align}
\int_t^{t + 1} \|\nabla \pl_1 u(s)\|_2^2 \,ds &\leqslant \mu^{-1}(M_5 + 2c_3M_5(1 + (1 + M_1 + M_3)M_2) \nonumber \\ 
&\mkern20mu + 2c_3(N_1 + \|K\|_{L_2(0,\infty;\ldiv)}^2)) =: M_6 \qquad (t \geqslant t_0 + 2). \label{u-attrac-9} 
\end{align}

(iv) \textbf{$\int_t^{t+1} \|u'(s)\|_2^2 \,ds$-estimate}: By \eqref{u-attrac-5}, \eqref{u-attrac-8}, \eqref{eq-Delta}, \eqref{u-attrac-6} and \eqref{u-attrac-9} we obtain 
\begin{equation}\label{u-attrac-10}
\|\nabla u(t)\|_2^2 \leqslant M_3 + M_5 =: M_7, \quad \int_t^{t + 1} \|\Delta u(s)\|_2^2 \,ds \leqslant M_4 + M_6 =: M_8     
\end{equation}
for all $t \geqslant t_0 + 2$. Therefore, after integrating \eqref{attrac-eq-4} from $t$ to $t + 1$, we get that
\begin{equation}\label{u-attrac-11}
\int_t^{t + 1} \|u'(s)\|_2^2 \,ds \leqslant c_5(M_8 + M_7M_8 + M_1 + M_7 + \|K\|_{L_2(0,\infty;\ldiv)}^2) =:M_9 
\end{equation}
for all $t \geqslant t_0 + 2$.

(v) \textbf{$\|\Delta u(t)\|_2$-estimate}: Subtracting \eqref{u-prime-ineq-2} by $\frac{\mu}{2}\|\nabla u'(t)\|_2^2$ and multiplying by $2$, we have that
\[
\frac{d}{dt} \|u'(t)\|_2^2 + \mu\|\nabla u'(t)\|_2^2 \leqslant 2c_7(1 + \|\Delta u(t)\|_2^2)\|u'(t)\|_2^2 + 2c_7 \|K'(t)\|_{\hhpui}^2 
\]
for a.e. $t \in (0,\infty)$. Now, due to \eqref{u-attrac-10} and \eqref{u-attrac-11}, it follows from Lemma~\ref{uniform-gronwall} that
\begin{equation}\label{u-attrac-12}
\|u'(t)\|_2^2 \leqslant (M_9 + 2c_7\|K'\|_{L_2(0,\infty;\hhpui)}^2)e^{2c_7(1 + M_8)}=: M_{10} \qquad (t \geqslant t_0 + 3).
\end{equation}
Consequently, since by assumption $K \in C_b([0,\infty);\ldiv)$ and \eqref{u-attrac-12}, \eqref{u-attrac-5} as well as \eqref{u-attrac-10} hold, we deduce from \eqref{u-prime-ineq-5} that
\[
\|\Delta u(t)\|_2^2 \leqslant c_8(M_{10} + M_1 + M_7 + M_7^3 + \sup_{s \in [0,\infty)}\|K(s)\|_2^2) \qquad (t \geqslant t_0 + 3).
\]
This completes the proof since $c_8$, $M_{10}$, $M_1$ and $M_7$ do not depend on $n$ and $r$, which gives $R$ as claimed.
\end{proof}

Now that we have shown the existence of an absorbing set for $S$, we can prove Theorem~\ref{main-thm-attrac} by means of Theorem~\ref{thm-attractor}.

\begin{proof}[Proof of Theorem~\ref{main-thm-attrac}]
Recall that, for every $t > 0$, $S(t) \colon \hhpu \to \hhpu$ is continuous due to \eqref{contin-depend}. Therefore, since $S$ has an absorbing set in $\hhpu $ by Proposition~\ref{absorb-ball} and $\Hhpu$ is compactly embedded in $\hhpu$, the assertion follows from Theorem~\ref{thm-attractor}.
\end{proof}

\section{Discussion}

In \cite{CoJo22}, Constantin and Johnson  derived a model for nonlinear wave propagation in the troposphere that is particularly relevant to meteorological phenomena such as the 'morning glory' cloud formations. 
The slightly simplified form \eqref{main-sys}-\eqref{incom-cond} has been studied from a mathematical viewpoint concerning well-posedness in  \cite{MaRo23,AlGr24}, but global existence for general initial data remains unclear. 

In this paper, we introduced and analyzed a modification that is motivated by standard atmospheric modeling practices using Dirichlet boundary condition at the top of the troposphere. 
Enforcing this boundary condition naturally introduces an additional pressure term to adjust the flow accordingly. 
Introducing pressure and Dirichlet boundary conditions has led us to the modified system \eqref{main-sys-2}-\eqref{init-cond}, which has not been considered before to our knowledge. For $\alpha=\beta=0$ it is an exact subsystem of the primitive equation model for the atmosphere, which has motivated us to leverage techniques from this well studied equation. 

Our investigation has focused on two central aspects:  well-posedness and long-term dynamics of the system. In contrast to the known results for the unmodified system, our global well-posedness result for arbitrary initial data in $H^1$ for \eqref{main-sys-2}-\eqref{init-cond}, Theorem~\ref{main-thm}, ensures unique strong solutions that depend continuously on the initial data. This may be seen an indication that top boundary conditions for $v$ and an additional pressure term are natural. 
Based on this, we have considered the long-term behavior of the system. On the one hand we have pointed out runaway modes that for a large set of parameter values: explicit plane wave solutions exhibit exponential and unbounded growth, somewhat similar to those in \cite{Calz}  and to standing modes from rigid lid conditions \cite{Kelly}. On the other hand, for a complementary set of parameter choices, we have shown that the system possesses a compact global attractor. 

Following \cite{MaRo23,AlGr24} we have focussed on the simplified form of the model from \cite{CoJo22} with constant coefficients $\alpha, \beta$ and isotropic diffusion. We expect that our proofs can be readily generalised to the case of $x_2$-dependent coefficients as in \cite{CoJo23}. 

In conclusion, our results offer novel insights into the atmospheric wave model, particularly into long-term behavior, stability and energy dissipation. 
We have shown that the parameter space supports qualitatively different dynamics for fixed boundary conditions. It would be interesting to explore this further also for other boundary condition such as those suggested recently in \cite{Kelly}. This also concerns the relation to the different wave dynamics that has been identified without boundary conditions in \cite{CoJo22,CoJo23}.

\medskip
\textbf{Acknowledgements.} 
The authors gratefully thank Edriss Titi for pointing out this topic. They also wish to acknowledge the support of the University of Bremen, where a significant portion of this work was completed, and ChatGPT for assistance with translation. This paper is a contribution to Project M1 of the Collaborative Research Centre TRR 181 ``Energy Transfers in Atmosphere and Ocean," funded by the Deutsche Forschungsgemeinschaft (DFG, German Research Foundation) under project number 274762653.









\end{document}